\newtheorem{thm}{Theorem}[section]
\newtheorem{lem}[thm]{Lemma}
\newtheorem{cor}[thm]{Corollary}
\newtheorem{prop}[thm]{Proposition}  
\theoremstyle{remark}
\theoremstyle{definition}
\newtheorem{rem}[thm]{Remark} 
\newtheorem{eg}[thm]{Examples}
\newtheorem{q}[thm]{Question} 
\newtheorem{def/prop}[thm]{Definition/Proposition}
\numberwithin{equation}{section}
\def\Im{\mathop{\mathrm{Im}}\nolimits}
\def\Ker{\mathop{\mathrm{Ker}}\nolimits}
\def\Coker{\mathop{\mathrm{Coker}}\nolimits}
\def\Hom{\mathop{\mathrm{Hom}}\nolimits}
\def\Gal{\mathop{\mathrm{Gal}}\nolimits}
\def\SL{\mathop{\mathrm{SL}}\nolimits}
\def\GL{\mathop{\mathrm{GL}}\nolimits}
\def\mod{\mathop{\mathrm{mod}}\nolimits}
\newcommand{\mf}[1]{{\mathfrak{#1}}}
\newcommand{\bb}[1]{{\mathbb{#1}}}
\newcommand{\mca}[1]{{\mathcal{#1}}}
\newcommand{\To}{\longrightarrow}
\newcommand{\inj}{\hookrightarrow}
\newcommand{\surj}{\twoheadrightarrow}
\newcommand{\congto}{\overset{\cong}{\to}}
\newcommand{\N}{\bb{N}}
\newcommand{\Z}{\bb{Z}}
\newcommand{\Zp}{\bb{Z}_{p}}
\newcommand{\Q}{\bb{Q}}
\newcommand{\Qp}{\bb{Q}_{p}}
\newcommand{\R}{\bb{R}}
\newcommand{\C}{\bb{C}}
\newcommand{\Cp}{\bb{C}_{p}}
\newcommand{\F}{\bb{F}}
\newcommand{\p}{\mf{p}}
\renewcommand{\P}{\mf{P}}
\newcommand{\m}{\mf{m}}
\newcommand{\mh}{\textsc{m}}
\newcommand{\del}{\partial}
\newcommand{\ol}{\overline}
\newcommand{\ds}{\displaystyle}
\newcommand{\wt}[1]{{\widetilde{#1}}}
\newcommand{\wh}[1]{{\widehat{#1}}}
\DeclareMathOperator*{\restprod}%
 {\mathchoice{\ooalign{\ensuremath{\displaystyle\prod}\crcr\ensuremath{\displaystyle\coprod}}}%
             {\ooalign{\ensuremath{\textstyle\prod}\crcr\ensuremath{\textstyle\coprod}}}%
             {\ooalign{\ensuremath{\scriptstyle\prod}\crcr\ensuremath{\scriptstyle\coprod}}}%
             {\ooalign{\ensuremath{\scriptscriptstyle\prod}\crcr\ensuremath{\scriptscriptstyle\coprod}}}%
 }
\newcommand{\pmx}[1]{\begin{pmatrix}#1\end{pmatrix}}
\newcommand{\spmx}[1]{{\small \pmx{#1}}}
\newcommand{\smat}[1]{\bigl(\begin{smallmatrix}#1\end{smallmatrix}\bigr)}
\title[Profinite rigidity for twisted Alexander polynomials] 
{Profinite rigidity for twisted Alexander polynomials} 
\author{Jun Ueki} 
\email{uekijun46@gmail.com}
\address{Department of Mathematics, School of System Design and Technology, Tokyo Denki University\\ 
5 Senju Asahi-cho, Adachi-ku, 120-8551, Tokyo, Japan}
\subjclass[2010]{Primary 57M27, 20E26  Secondary 20E18, , 11R06} 
\keywords{knot, twisted Alexander polynomial, profinite completion} %\framebox{last updated：\today \ \ \now}}
\begin{document}
	
\begin{abstract} 
We formulate and prove a profinite rigidity theorem for the twisted Alexander polynomials up to several types of finite ambiguity. We also establish torsion growth formulas of the twisted homology groups in a $\Z$-cover of a 3-manifold with use of Mahler measures. We examine several examples associated to Riley's parabolic representations of two-bridge knot groups and give a remark on hyperbolic volumes. 
\end{abstract}

\maketitle

\setcounter{tocdepth}{1}
{\small 
\tableofcontents } 

%\begin{comment}

\section{Introduction} 
Recently the profinite rigidity in 3-dimensional topology is of great interest with rapid progress. Nevertheless, it is still unknown whether there exists a pair $(J,K)$ of distinct \emph{prime} knots with an isomorphism $\wh{\pi}_J \cong \wh{\pi}_K$ on the profinite completions of their knot groups. 
In the previous article \cite{Ueki5} we proved that the Alexander polynomial of a knot $K$ is determined by the isomorphism class of the profinite completion $\wh{\pi}_K$ of the knot group. 
In this article, we formulate a version of profinite rigidity for twisted Alexander polynomials and prove it up to several types of finite ambiguity. 
In addition, we establish asymptotic formulas of the torsion growth of the twisted homology groups in a $\Z$-cover of a 3-manifold, refining the studies of the author \cite{Ueki4} and R.Tange \cite{TangeRyoto2018JKTR}. 
We finally examine several examples associated to Riley's parabolic representations of two bridge knots and give a remark on hyperbolic volumes.

Let $\pi$ be a discrete group of finite type with a surjective homomorphism $\alpha:\pi\surj t^\Z$, where $t$ is a formal element generating $\Im \alpha$. 
Let $O=O_{F,S}$ denote the ring of $S$-integers, where $S$ is a finite set of maximal ideals of the ring of integers $O_F$ of a number field $F$, and let $\rho:\pi\to \GL_N(O)$ be a representation. 
Let $\wt{{\rm Fitt}}H_i(\Ker\alpha, \rho)$ denote the divisorial hull of the Fitting ideal over $O[t^\Z]$ of the $i$-th Alexander module $H_i(\Ker\alpha, \rho)$. 
If $\wt{{\rm Fitt}}H_i(\Ker\alpha, \rho)$ is a principal ideal, then \emph{the $i$-th twisted Alexander polynomial} 
$\Delta_{\rho,i}^\alpha(t)$ is defined as a generator of this ideal, up to multiplication by units of $O[t^\Z]$. 
(We denote by $\dot{=}$ equalities up to this ambiguity. We sometime omit $i$ and $\alpha$ for simplicity.) 
By the universality of profinite completion, this $\rho$ induces a continuous representation 
$\wh{\rho}:\wh{\pi}\to \GL_N(\wh{O})$. Let $\beta:\pi'\surj t^\Z$ and $\tau:\pi'\to \GL_N(O)$ be another such pair. 
We say $(\wh{\alpha},\wh{\rho})$ and $(\wh{\beta},\wh{\tau})$ are \emph{isomorphic} if they admit isomorphisms $\varphi:\wh{\pi}\congto \wh{\pi}'$ and $\psi:t^\wh{\Z}\congto t^\wh{\Z}$ satisfying $\psi\circ \wh{\alpha}= \wh{\beta}\circ \varphi$ and $\wh{\tau}\circ \varphi\sim \wh{\rho}$, where $\sim$ denotes the equivalence by a conjugate in $\GL_N(\wh{O})$. 
Our main question is the following: 

\begin{q} \label{q}
\emph{To what extent} does the isomorphism class of $(\wh{\alpha},\wh{\rho})$ \emph{determine} $\Delta_\rho^\alpha(t)$? 
\end{q} 

We denote by $\ol{\Q}$ the algebraic closure of $\Q$ in $\C$ and assume that a number field $F$ is a subfield of $\ol{\Q}$. 
For each prime number $p$, let $\C_p$ denote the $p$-adic completion of an algebraic closure $\ol{\Q}_p$ of a $p$-adic numbers $\Qp$. 
Such $\C_p$ is known to be algebraically closed. 
We fix an embedding $\iota_p:\ol{\Q}\inj \C_p$, so that the $p$-adic norm is defined for any $z\in \ol{\Q}$. 
Note that the norm map ${\rm Nr}_{F/\Q}:F\to \Q$ induces a map ${\rm Nr}_{F/\Q}:O_{F,S}[t^\Z]\to \Z_S[t^\Z]$, 
where $S$ also denotes the image of $S$ in $\Z$. 
We say that $f(t), g(t)\in \R[t^\Z]$ belong to \emph{the} same \emph{Hillar class} if 
$f(t)\,\dot{=}\,u(t)v(t)$ and $g(t)\,\dot{=}\,u(t)v(t^{-1})$ holds for some $u(t),v(t)\in \R[t^\Z]$. 
Let $A$ be any set. For a $d$-tuple $(\alpha_i)_i\in A^d$, we denote by $[\alpha_i]_i \in A^d/\mca{S}_d$ 
the equivalent class with respect to the natural action of the $d$-th symmetric group $\mca{S}_d$.  
Now our answer to the question is stated as follows: 

\begin{thm} \label{thm} 
Let $\alpha:\pi\surj t^\Z$ and $\rho:\pi\to \GL_N(O)$ over $O=O_{F,S}$ be as above. 
Suppose that $\wt{\rm Fitt}H_i(\Ker \alpha, \rho)$ in $O[t^\Z]$ is a principal ideal 
so that $\Delta_\rho(t)=\Delta_{\rho,i}^\alpha(t)$ is defined. \\[-3mm]

\noindent 
(1) 
The isomorphism class of $(\wh{\alpha},\wh{\rho})$ determines the Hillar class of ${\rm Nr}_{F/\Q}\Delta_{\rho}(t) \in \Z_S[t^\Z]$, 
hence the (Euclidean/$p$-adic) Mahler measure of ${\rm Nr}_{F/\Q}\Delta_{\rho}(t)$. 
If in addition $\Delta_{\rho}(t)$ is reciprocal, then ${\rm Nr}_{F/\Q}\Delta_{\rho}(t)$ itself is determined.\\[-3mm]

\noindent 
(2) Let $p$ be a prime number such that all roots of $\Delta_{\rho}(t)$ are placed on the unit circle in $\C_p$, which is the case for almost all $p$. 
For each root $\alpha_i$ of $\Delta_{\rho}(t)$, let $\zeta_i$ denote the unique root of unity with $|\alpha_i-\zeta_i|_p<1$ and suppose that this $\zeta_i$ is a primitive $l_i$-th root of unity. Put $m:={\rm lcm}\{l_i\}_i>0$, so that we have $p\!\!\not |\,\,l_i$ and $p\!\!\not |\,\,m$. 
Then the isomorphism class of $(\wh{\alpha},\wh{\rho})$ determines the set of such $p$'s and the families $[l_i]_i \in \Z^d/\mca{S}_d$, 
$\{[(\alpha_i/\zeta_i)^w]_i\mid w\in \Z_p^*\}$, and $\{[\zeta_i^u]_i \mid u\in (\Z/m\Z)^*\}$ in $\ol{\Q}^d/\mca{S}_d$. 

Let $s$ be any topological generator of $\Im\wh{\alpha}=t^\wh{\Z}$. 
Then we have $s=t^v$ for a unit $v$ of the Pr\"uffer ring $\wh{\Z}=\varprojlim_n \Z/n\Z$. 
Since the image of $t$ in $\Im \wh{\alpha}$ is not specified, neither is $v$, while $t$ and $v$ are independent of $p$. 
The pair $(u,w)$ is the image of this $v$ under the natural map $\wh{\Z}\surj \Z/m\Z\times \Zp; v\mapsto (u,w)$. 
\end{thm}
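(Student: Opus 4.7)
The plan is to transport the twisted Alexander module through the isomorphism $(\wh{\alpha},\wh{\rho})\cong(\wh{\beta},\wh{\tau})$ and compare the resulting extended Fitting ideals in the completed coefficient ring $\wh{O}[[t^{\wh{\Z}}]]$. Writing $\psi(t)=t^{v}$ for the unit $v\in\wh{\Z}^{*}$ determined by $\psi$, I would first show that the profinite completion of $M:=H_i(\Ker\alpha,\rho)$ acquires a canonical $\wh{O}[[t^{\wh{\Z}}]]$-module structure determined by the pair $(\wh{\alpha},\wh{\rho})$, via a compatibility between twisted group homology and profinite completion on both the group and coefficient sides. Since Fitting ideals are preserved under base change, the principality of $\wt{{\rm Fitt}}H_i(\Ker\alpha,\rho)$ descends to the extended ideal, which is generated by the image of $\Delta_{\rho}(t)$. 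Tracking the $t$-action under $\varphi$, combined with $\wh{\tau}\circ\varphi\sim\wh{\rho}$, then yields the key relation
\[
\Delta_{\tau}(t)\;\dot{=}\;\Delta_{\rho}(t^{v})\qquad\text{in }\wh{O}[[t^{\wh{\Z}}]],
\]
which, after applying $\Nr_{F/\Q}$, gives the analogous identity in $\wh{\Z}_{S}[[t^{\wh{\Z}}]]$.

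For part~(1), both $\Nr_{F/\Q}\Delta_{\rho}(t)$ and $\Nr_{F/\Q}\Delta_{\tau}(t)$ lie in $\Z_{S}[t^{\Z}]$ and are related by the substitution $t\mapsto t^{v}$. To interpret this $\wh{\Z}^{*}$-substitution in integer terms, I would specialize $t$ to all roots of unity: for each $n\geq 1$, the reduction of $v$ modulo $n$ lies in $(\Z/n\Z)^{*}$ and induces the Galois permutation $\zeta\mapsto\zeta^{v}$ on $n$-th roots of unity, so taking products yields the cyclic resultant identities
\[
{\rm Res}\bigl(\Nr_{F/\Q}\Delta_{\rho},\,t^{n}-1\bigr)\;=\;{\rm Res}\bigl(\Nr_{F/\Q}\Delta_{\tau},\,t^{n}-1\bigr),\qquad n=1,2,\ldots.
\]
By Hillar's theorem on polynomials with matching cyclic resultants, the two polynomials belong to the same Hillar class. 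Because the Euclidean and $p$-adic Mahler measures are multiplicative and invariant under $t\leftrightarrow t^{-1}$, they are Hillar-class invariants and hence determined. Under the reciprocity assumption on $\Delta_{\rho}(t)$, the ambiguity $u(t)v(t)\leftrightarrow u(t)v(t^{-1})$ collapses, pinning down $\Nr_{F/\Q}\Delta_{\rho}(t)$ itself.

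For part~(2), fix a prime $p$ such that every root $\alpha_{i}\in\C_p$ of $\Delta_{\rho}(t)$ lies on the $p$-adic unit circle, which holds for all but finitely many $p$ by integrality; the admissibility of $p$ is itself read off from the relation in $\wh{\Z}_{S}[[t^{\wh{\Z}}]]$. Factor $\alpha_{i}=\zeta_{i}\cdot(\alpha_{i}/\zeta_{i})$ with $\zeta_{i}$ the unique root of unity of order $l_{i}$ coprime to $p$ satisfying $|\alpha_{i}-\zeta_{i}|_{p}<1$, and with $\alpha_{i}/\zeta_{i}\in 1+\m_{\C_p}$ a principal unit admitting canonical $\Z_{p}$-exponentiation. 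Under the projection $\wh{\Z}^{*}\to(\Z/m\Z)^{*}\times\Zp^{*},\ v\mapsto(u,w)$, the substitution $t\mapsto t^{v}$ acts on roots as $\alpha_{i}\mapsto\zeta_{i}^{u}(\alpha_{i}/\zeta_{i})^{w}$. Hence the multiset of roots of $\Delta_{\rho}(t^{v})$, equivalently of $\Delta_{\tau}(t)$, equals $\{\zeta_{i}^{u}(\alpha_{i}/\zeta_{i})^{w}\}_{i}$, from which the families $[l_{i}]_{i}$ and the orbit-sets $\{[(\alpha_{i}/\zeta_{i})^{w}]_{i}\}_{w\in\Zp^{*}}$ and $\{[\zeta_{i}^{u}]_{i}\}_{u\in(\Z/m\Z)^{*}}$ are recovered.

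The principal obstacle is the very first step: rigorously establishing that the Fitting ideal of the twisted Alexander module is compatible with profinite completion of both $\pi$ and $O$, so that the key relation $\Delta_{\tau}(t)\dot{=}\Delta_{\rho}(t^{v})$ genuinely holds in $\wh{O}[[t^{\wh{\Z}}]]$. This requires extending the author's prior untwisted work \cite{Ueki5} to representations whose image is infinite and whose coefficients lie in a general ring of $S$-integers, where neither $\wh{O}$ nor $\wh{O}[[t^{\wh{\Z}}]]$ is literally flat over the original rings, so the base-change step must be argued via an explicit finite presentation of the Alexander module together with a limit argument over finite quotients of $\pi$.
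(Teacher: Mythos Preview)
Your overall architecture matches the paper's: reduce to an equality of principal ideals $(\Delta_\rho(t))=(\Delta_\tau(t^v))$ in $\wh{O}[[t^{\wh{\Z}}]]$, then for (1) apply the norm and Hillar's theorem, and for (2) decompose roots as (Teichm\"uller root of unity)$\times$(principal unit). The setup step you flag as the ``principal obstacle'' is handled in the paper exactly as you suggest, via continuous homology of profinite groups and finite presentations (Section~5), so your instinct there is correct.

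There are, however, two genuine gaps relative to the paper's argument.

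\textbf{Part (1): cyclotomic divisors and absolute values.} Your passage from the ideal equality to ${\rm Res}(\Nr_{F/\Q}\Delta_\rho,t^n-1)={\rm Res}(\Nr_{F/\Q}\Delta_\tau,t^n-1)$ is too quick on two counts. First, an equality of \emph{ideals} in $\wh{\Z}_S[[t^{\wh{\Z}}]]$ only yields equality of cyclic resultant \emph{absolute values}, not of the resultants themselves; this is exactly the input to Hillar's theorem, but it is a nontrivial fact the paper imports from \cite{Ueki4}. Second, and more seriously, Hillar's theorem requires the cyclic resultants to be \emph{nonzero}. If $\Delta_\rho$ has a cyclotomic factor $\Phi_m$, infinitely many resultants vanish and the argument collapses. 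The paper treats this separately (Section~6, Proposition~6.1): one shows that $\Phi_m\mid f(t)$ iff $\Phi_m\mid g(t)$ in the completed ring, strips off the cyclotomic factors, and only then applies Hillar. You also omit the normalization step passing from $\wh{\Z}_S[[t^{\wh{\Z}}]]$ to $\wh{\Z}[[t^{\wh{\Z}}]]$ by clearing $S$-denominators.

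\textbf{Part (2): extracting roots from the completed ring.} Your sentence ``the multiset of roots of $\Delta_\rho(t^v)$, equivalently of $\Delta_\tau(t)$'' hides the entire difficulty. The element $\Delta_\rho(t^v)\in\wh{O}[[t^{\wh{\Z}}]]$ is not a polynomial, so it has no multiset of roots in any na\"ive sense; and the relation with $\Delta_\tau(t)$ is only an equality of \emph{ideals} up to units of a large profinite ring, so one cannot simply read off roots. The paper supplies the missing mechanism: project along
\[
\wh{O}[[t^{\wh{\Z}}]]\;\to\;\Cp[[t^{\Z/m\Z\times\Zp}]]\;\cong\;\prod_{\xi^m=1}\Cp[[t^{\Zp}]]\;\cong\;\prod_{\xi^m=1}\Cp[[T]],
\]
the last arrow being the Iwasawa isomorphism $t\mapsto 1+T$. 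In each factor the $p$-adic Weierstrass preparation theorem guarantees that a nonunit ideal $(T-\beta)$ with $|\beta|_p<1$ determines $\beta$ uniquely; this is what lets one recover $(\alpha_i/\zeta_i)^w$ and $\zeta_i^u$ from the ideal alone. Your Teichm\"uller/principal-unit factorization is exactly the right heuristic for why the answer takes the stated form, but without Weierstrass preparation (or an equivalent uniqueness statement in the completed ring) the recovery step is unjustified.
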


If $O$ is a UFD (hence a PID, for $O$ being a Dedekind domain), then $\Delta_\rho(t)$ is defined. 
If $\rho$ is an $\SL_2$-representation or more generally if $\rho$ is conjugate to its dual, then $\Delta_\rho(t)$ is reciprocal (\cite{HillmanSilverWilliams2010}, \cite[Chapter 6.5]{Hillman2}). 
The assertion (1) only tells about each divisor of $\Delta_\rho(t)$ in $O[t^\Z]$, while (2) tells about the multiplicity. 
If a cyclotomic polynomial decomposes in $O[t^\Z]$, then an inevitable ambiguity may occur (Examples \ref{eg.cyclotomic}). 
The proof of (1) is a direct extension of \cite{Ueki4} by Hillar's result \cite[Theorem 1.8]{Hillar2005} on cyclic resultants. 
In the proof of (2), we invoke the character decomposition of the Iwasawa module of a $\Z/m\Z\times \Zp$-cover and some local field theory. 
The following propositions may reduce the ambiguity in (2). 

On explicit detection of $\Delta_\rho^\alpha(t)$, after knowing the degree and eliminating cyclotomic divisors,  
Hillar's method indeed tells that Theorem \ref{thm} (1) is done in finite time with use of Gr\"obner basis, 
while Propositions \ref{propA}, \ref{propB} below tell that so is (2). 

\begin{prop} \label{propA} 
Let $0\neq \alpha \in \ol{\Q}$, let $k$ denote the Galois closure of $\Q(\alpha)$, and put $d=[k:\Q]$. 
Then for any but finite number of prime number $p$ satisfying $d|(p-1)$, we have $\alpha\in \Q_p$, $|\alpha|_p=1$, and $|\alpha^{p-1}-1|_p<1$. 
\end{prop}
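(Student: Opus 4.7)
The strategy is Chebotarev's density theorem applied to the Galois closure $k/\Q$ (and to its compositum with the cyclotomic field), followed by elementary facts about units in $\Z_p$.

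First, I would let $P$ be the finite set of rational primes consisting of those that ramify in $k/\Q$, together with those dividing the absolute norm of the fractional ideal $(\alpha)$ of $k$. For $p \notin P$, every prime $\mf{p}$ of $k$ above $p$ is unramified and satisfies $v_{\mf{p}}(\alpha)=0$. I would then consider the Galois compositum $L := k(\zeta_d)$. A prime $p$ splits completely in $L$ if and only if it simultaneously splits completely in $k$ (Frobenius trivial in $\Gal(k/\Q)$) and in $\Q(\zeta_d)$ (equivalently, $d \mid p-1$); by Chebotarev's density theorem such primes have positive density $1/[L:\Q]$, so after excluding $P$ they still form the relevant cofinite subset of the primes in question.

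For a prime $p \notin P$ satisfying $d \mid p-1$ and splitting completely in $k$, the choice of a prime $\mf{p}$ of $k$ above $p$ forces $k_{\mf{p}} = \Q_p$; identifying via the fixed embedding $\iota_p : \ol{\Q} \inj \C_p$ we obtain $\alpha \in k \subseteq \Q_p$, proving the first assertion. The vanishing $v_{\mf{p}}(\alpha)=0$ upgrades this to $\alpha \in \Z_p^{\times}$, yielding $|\alpha|_p = 1$. Finally, Fermat's little theorem applied to the reduction $\bar{\alpha} \in \F_p^{\times}$ gives $\bar{\alpha}^{p-1} = 1$, so $\alpha^{p-1} \in 1 + p\Z_p$ and $|\alpha^{p-1} - 1|_p \le p^{-1} < 1$.

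The argument presents no substantive obstacle---it is Chebotarev plus Fermat. The only mild technical point is the compatibility between the fixed embedding $\iota_p$ and the choice of prime $\mf{p}$ above $p$ in $k$; in the completely split case this is automatic, since every such $\mf{p}$ has completion $\Q_p$, so the conclusion is independent of the choice.
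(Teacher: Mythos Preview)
Your second paragraph is fine: for a prime $p\notin P$ that splits completely in $k$, one indeed gets $\alpha\in\Z_p^\times$ and then $|\alpha^{p-1}-1|_p<1$ via Fermat. The gap is the sentence ``so after excluding $P$ they still form the relevant cofinite subset of the primes in question.'' Chebotarev tells you that the primes splitting completely in $L=k(\zeta_d)$ have positive density $1/[L:\Q]$, but that does not make them \emph{cofinite} among the primes with $d\mid(p-1)$. The latter are exactly the primes splitting completely in $\Q(\zeta_d)$, and since $[k:\Q]=d>\phi(d)=[\Q(\zeta_d):\Q]$ whenever $d>1$, one never has $k\subset\Q(\zeta_d)$; Chebotarev then produces infinitely many primes that split in $\Q(\zeta_d)$ but not in $k$, and for these $\alpha\notin\Q_p$. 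Concretely, take $\alpha=i$: here $k=\Q(i)$, $d=2$, every odd prime satisfies $2\mid(p-1)$, yet for $p\equiv 3\pmod 4$ one has $i\notin\Q_p$ and $|i^{\,p-1}-1|_p=|-2|_p=1$.

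For comparison, the paper does not invoke Chebotarev; it argues via decomposition groups that for $p$ unramified in $k$ the local degree $n=[k_p:\Q_p]$ divides $d$, so $d\mid(p-1)$ gives $n\mid(p-1)$, and then asserts that the Frobenius in $\Gal(k_p/\Q_p)\cong\Z/n\Z$ is thereby trivial. That inference is the exact analogue of your cofinite leap and is refuted by the same example (there $n=2$ divides $p-1=2$, but the Frobenius has order $2$). What your argument \emph{does} correctly establish is that the primes splitting completely in $L$ form an infinite, positive-density set on which all three conclusions hold, and this weaker statement already suffices for the use made of Proposition~\ref{propA} later in the paper.
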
 

\begin{prop} \label{propB}
Let $\alpha, \beta \in \ol{\Q}_p $ with $|\alpha -1|_p<1$ and $|\beta-1|_p<1$, and suppose that 
there exists some $w\in \Zp^*$ with $\alpha^w=\beta$. 
If $\alpha$ and $\beta$ are conjugate over $\Qp$ and are not roots of unity, then $w$ is a root of unity. 
\end{prop}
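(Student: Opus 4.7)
The plan is to linearize the equation $\alpha^w=\beta$ by applying the $p$-adic logarithm, so that the multiplicative hypothesis becomes an eigenvalue-type identity $\sigma(\ell)=w\ell$ for some Galois element $\sigma$ acting on the algebraic number $\ell:=\log_p\alpha$. The conclusion will then follow because the Galois orbit of $\ell$ is finite while the hypothesis that $\alpha$ is not a root of unity guarantees $\ell\neq 0$.

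Concretely, first I would observe that $|\alpha-1|_p<1$ and $|\beta-1|_p<1$ place both elements in the domain of convergence of $\log_p(1+x)=\sum_{k\ge 1}(-1)^{k-1}x^k/k$, so $\ell:=\log_p\alpha$ and $\log_p\beta$ are well-defined elements of $\ol{\Q}_p$, and $\ell\in \Qp(\alpha)$ is algebraic over $\Qp$. Applying $\log_p$ to $\alpha^w=\beta$ gives $w\ell=\log_p\beta$. By assumption there is $\sigma\in\Gal(\ol{\Q}_p/\Qp)$ with $\sigma(\alpha)=\beta$, and since $\log_p$ is given by a convergent power series with rational coefficients it commutes with continuous field automorphisms, yielding $\sigma(\ell)=\log_p\sigma(\alpha)=\log_p\beta=w\ell$. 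Iterating, $\sigma^n(\ell)=w^n\ell$ for every $n\in\Z$.

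Since $\ell$ is algebraic over $\Qp$, its $\Gal(\ol{\Q}_p/\Qp)$-orbit is finite, so $\{w^n\ell\}_{n\in\Z}$ is finite; provided $\ell\neq 0$, this forces $\{w^n\}_{n\in\Z}\subset \Zp^*$ to be finite, and any element of $\Zp^*$ of finite order is a root of unity. The substantive point requiring justification is the nonvanishing $\ell\neq 0$, which is exactly where the hypothesis that $\alpha$ is not a root of unity enters. It rests on the standard fact that on the disc $|z-1|_p<1$ the kernel of $\log_p$ coincides with the group of roots of unity: a suitable power $\alpha^N$ lies in the smaller disc $|z-1|_p<p^{-1/(p-1)}$, on which $\log_p$ and $\exp_p$ are mutually inverse, so $\log_p\alpha=0$ would force $\alpha^N=1$. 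Beyond this, no further input is needed and I do not anticipate a real obstacle.
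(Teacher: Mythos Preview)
Your proof is correct and follows essentially the same approach as the paper. The paper works directly with $\alpha$: it picks $\sigma\in\Gal(k_p/\Qp)$ with $\sigma(\alpha)=\beta=\alpha^w$, iterates to get $\sigma^n(\alpha)=\alpha^{w^n}$, takes $n=[k_p:\Qp]$ so that $\sigma^n=\mathrm{id}$, and concludes $w^n=1$ from the hypothesis that $\alpha$ is not a root of unity. Your version linearizes via $\log_p$ first, but the skeleton---iterate the Galois element, use finiteness of the orbit, and invoke the ``not a root of unity'' hypothesis to rule out the degenerate case---is identical. If anything, your explicit treatment of why $\ell=\log_p\alpha\neq 0$ (via the kernel of $\log_p$ on the open unit disc) fills in a step the paper leaves implicit when it passes from $\alpha^{w^n}=\alpha$ to $w^n=1$.
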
 

\begin{rem} \label{rem.WDH} 
Even if $\wt{{\rm Fitt}}H_i(\Ker\alpha, \rho)$ is not a principal so that $\Delta_{\rho,i}^\alpha(t)$ is not defined, the ideal ${\rm Nr}_{F/\Q}\wt{{\rm Fitt}}H_i(\Ker\alpha, \rho)$ is still a principal ideal of $\Z_S[t^\Z]$. If we define $\ol{\Delta}{}_{\rho,i}^\alpha(t)$ as its generator, then the same assertion as in Theorem \ref{thm} (1) holds. %(Note that $\wt{\phantom{\bullet}}$ and ${\rm Nr}_{F/\Q}$ commutes.) 

In any case, 
if $\pi$ is of deficiency one (e.g., a knot group) and $H_1(\Ker\alpha, \rho)$ is a torsion $O[t^\Z]$-module, 
then the fractional ideal $\wt{{\rm Fitt}}H_1(\Ker\alpha, \rho)/\wt{{\rm Fitt}}H_0(\Ker\alpha, \rho)$ %in the ideal group of $O[t^\Z]$ 
is a principal ideal, and is generated by Lin--Wada's polynomial $W_\rho^\alpha(t)\in F(t)$  (Proposition \ref{prop.Wada}). 
If $\Delta_{\rho,i}^\alpha(t)$ for $i=0,1$ are defined, then $W_\rho^\alpha(t)\,\,\dot{=}\,\, \Delta_{\rho,1}^\alpha(t)/\Delta_{\rho,0}^\alpha(t)$ up to multiplication by units of $O[t^\Z]$ holds. 
\end{rem}
 
Wada's initial definition of $W_{\rho}(t)$ may be regarded with less indeterminacy, 
defined up to multiplicity by $\pm t^{\pm1}$, and coincides with the Reidemeister torsion $\tau_{\rho}(t)\in F[t]$. 
By Goda's result \cite{Goda2017pja} derived from a deep theory of analytic torsion developed by M\"uller and others, we may conclude the following. 
\begin{cor} \label{cor.vol} 
Let $\rho_{\rm hol}:\pi_K\to \SL_2(O)$ denote the holonomy representation of a hyperbolic knot $K$ over $O=O_{F,S}\subset \C$. 
If %$O=\Z$ or 
$O=O_F$ for an imaginary quadratic field $F$, 
then the hyperbolic volume ${\rm Vol}(K)$ of $K$ is determined by the isomorphism class of $\wh{\rho}_{\rm hol}$. 
\end{cor}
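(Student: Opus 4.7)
The plan is to thread the argument through Wada's polynomial: convert the profinite datum into a Mahler measure via Theorem~\ref{thm}(1) combined with $\SL_2$-reciprocity, and then convert that Mahler measure into the hyperbolic volume via Goda's identity.

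First, Remark~\ref{rem.WDH} identifies $W_{\rho_{\rm hol}}^\alpha(t)$, which equals $\Delta_{\rho_{\rm hol},1}^\alpha(t)/\Delta_{\rho_{\rm hol},0}^\alpha(t)$ up to units of $O[t^\Z]$, with the Reidemeister torsion $\tau_{\rho_{\rm hol}}(t)\in F(t)$; the only residual ambiguity is a factor $\pm t^{\pm 1}$ of zero archimedean Mahler measure. Goda's theorem \cite{Goda2017pja}, which incarnates M\"uller's analytic-torsion identity in the knot-group setting, then expresses ${\rm Vol}(K)$ as a universal constant times the archimedean Mahler measure $m(\tau_{\rho_{\rm hol}})$. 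By multiplicativity of $m$, it is enough to recover each $m(\Delta_{\rho_{\rm hol},i})$ for $i=0,1$ from $\wh{\rho}_{\rm hol}$ alone.

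Next I would exploit that $\rho_{\rm hol}$ lands in $\SL_2(O)$: by the Hillman--Silver--Williams criterion recalled just after Theorem~\ref{thm}, each $\Delta_{\rho_{\rm hol},i}$ is reciprocal. Theorem~\ref{thm}(1) therefore pins down ${\rm Nr}_{F/\Q}\Delta_{\rho_{\rm hol},i}(t)\in \Z_S[t^\Z]$ itself rather than only its Hillar class, and in particular determines its archimedean Mahler measure. Since $F$ is imaginary quadratic we have ${\rm Gal}(F/\Q)=\{1,c\}$ with $c$ complex conjugation, so
\[{\rm Nr}_{F/\Q}\Delta_{\rho_{\rm hol},i}(t) = \Delta_{\rho_{\rm hol},i}(t)\cdot\ol{\Delta_{\rho_{\rm hol},i}}(t),\]
where the bar denotes coefficient-wise complex conjugation. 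Conjugation permutes the roots without changing their moduli and doubles the logarithm of the absolute value of the leading coefficient, so $m({\rm Nr}_{F/\Q}\Delta_{\rho_{\rm hol},i}) = 2\,m(\Delta_{\rho_{\rm hol},i})$. Each Mahler measure is thereby recovered, subtracting them gives $m(\tau_{\rho_{\rm hol}})$, and hence ${\rm Vol}(K)$.

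The main obstacle I foresee is one of normalization rather than substance. One must verify that the torsion $\tau$ appearing in Goda's identity agrees, up to factors of Mahler measure zero, with the Fitting-ideal quotient $\Delta_{\rho_{\rm hol},1}/\Delta_{\rho_{\rm hol},0}$; in particular that any meridional correction $\det(tI-\rho_{\rm hol}(\mu))$ that may appear in the standard normalization is either cyclotomic (and so contributes trivially to Mahler measure) or is itself reciprocal and so directly controlled by Theorem~\ref{thm}(1). Once this bookkeeping is settled, the chain $\wh{\rho}_{\rm hol}\rightsquigarrow m(\Delta_{\rho_{\rm hol},i})\rightsquigarrow m(\tau_{\rho_{\rm hol}})\rightsquigarrow {\rm Vol}(K)$ closes without further input, and the imaginary-quadratic hypothesis on $F$ is used precisely once, at the step $m({\rm Nr}_{F/\Q}\Delta)=2\,m(\Delta)$.
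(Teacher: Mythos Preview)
There is a genuine gap: you have misread Goda's theorem. Goda \cite{Goda2017pja} does \emph{not} express ${\rm Vol}(K)$ as a constant times the Mahler measure of $\tau_{\rho_{\rm hol}}$. What it says (see the Proposition quoted in Section~\ref{sec.vol}) is
\[
\lim_{m\to\infty}\frac{\log|A_{n}(1)|}{n^2}=\frac{{\rm Vol}(K)}{4\pi},\qquad A_n(t)=\frac{\tau_{\rho_n}(t)}{\tau_{\rho_2}(t)}\ \text{or}\ \frac{\tau_{\rho_n}(t)}{\tau_{\rho_3}(t)},
\]
where $\rho_n$ is the $n$-th symmetric power of $\rho_{\rm hol}$. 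Two features matter: the formula uses \emph{evaluations at $t=1$}, not Mahler measures, and it requires the whole sequence $(\rho_n)_n$, not $\rho_{\rm hol}$ alone. Your chain $\wh{\rho}_{\rm hol}\rightsquigarrow m(\tau_{\rho_{\rm hol}})\rightsquigarrow {\rm Vol}(K)$ breaks at the second arrow.

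The paper's argument runs as follows. Since $\rho_n$ is the composite of $\rho_{\rm hol}$ with a fixed map $\SL_2(O)\to\SL_n(O)$, the isomorphism class of $\wh{\rho}_{\rm hol}$ determines that of every $\wh{\rho}_n$. Each $W_{\rho_n}(t)$ is reciprocal, so Theorem~\ref{thm}(1) (via Remark~\ref{rem.WDH}) determines ${\rm Nr}_{F/\Q}W_{\rho_n}(t)\in\Z[t^\Z]$ itself; here $S=\emptyset$ is used so that the ambiguity is only $\pm t^k$. Evaluating at $t=1$ gives $|{\rm Nr}_{F/\Q}W_{\rho_n}(1)|$. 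Now the imaginary quadratic hypothesis enters exactly as you identified, but pointwise rather than via Mahler measure: ${\rm Nr}_{F/\Q}(u)=u\bar u=|u|^2$ for any $u\in O$, so $|W_{\rho_n}(1)|$ is recovered; and since units of $O_F$ are roots of unity, $|W_{\rho_n}(1)|=|\tau_{\rho_n}(1)|$. Feeding this sequence into Goda's limit yields ${\rm Vol}(K)$. Your observation that $m({\rm Nr}_{F/\Q}\Delta)=2\,m(\Delta)$ is correct but is not the step that carries the argument.
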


Fox's formula asserts that the absolute cyclic resultants of the Alexander polynomial $\Delta_K(t)$ of a knot $K$ coincide with the sizes of the torsions in the cyclic covers (cf.~\cite{Weber1979}). Hence  
the asymptotic formula of the torsion growth in the $\Z$-cover with use of Mahler measure $\mh$ \cite{GS1991} and its $p$-adic analogue $\mh_p$ \cite{Ueki4} is closely related to the profinite detection of $\Delta_K(t)$.  

Let $\alpha:\pi\surj t^\Z$ and $\rho:\pi\to \GL_N(O)$ be as before, and let $X$ be a connected manifold with $\pi\cong \pi_1(X)$. 
For each $n\in \N_{>0}$, let $X_n\to X$ denote the $\Z/n\Z$-cover in the $\Z$-cover defined by $\alpha$. 
%$X_\infty \to X$ and $X_n\to X$ denote the $\Z$-cover and the $\Z/n\Z$-cover corresponding to $\alpha$ and $\alpha\circ (\Z\surj \Z/n\Z)$ for each $n\in \N_{>0}$. 
Define \emph{the norm polynomial} $\ol{\Delta}_\rho(t)\in \Z[t^\Z]$ to be a generator of the ideal ${\rm Nr}_{F/\Q} \wt{{\rm Fitt}}(H_1(X_\infty, \rho))\cap \Z[t^\Z]$. 
If $\Delta_\rho(t)$ is defined, then we have $\ol{\Delta}_\rho(t)$ $\dot{=}$ ${\rm Nr}_{F/\Q}\Delta_\rho(t)$ in $\Z_S[t^\Z]$.  .

Suppose that $\rho$ is absolutely irreducible and non-abelian. 
Then Tange's result in \cite{TangeRyoto2018JKTR} may be refined as follows (Theorem \ref{thm.mh}), 
where we denote by ${\rm Res}(f(t), g(t)) \in O$ the resultant of $f(t), g(t) \in O[t]$. 

(1) There is a natural isomorphism $\ds H_1(X_\infty,\rho)/(t^n-1)H_1(X_\infty,\rho) \congto H_1(X_n,\rho)$. 

(2) Put $\Psi_n(t)={\rm gcd}(\ol{\Delta}_\rho(t), (t^n-1))$ and $r_n={\rm Res}(\ol{\Delta}_\rho(t), (t^n-1)/\Psi_n(t))$. 
Then for a bounded sequence $c_n=|{\rm tor}H_1(X_\infty,\rho)/\Psi_n(t))|$, 
the equality $|{\rm tor}H_1(X_n,\rho)|=c_n |r_n|\prod_{p\in S}|r_n|_p$ holds. 

(3) The asymptotic formulas 
\[\ds \lim_{n\to \infty} |{\rm tor}H_1(X_n,\rho)|^{1/n}=\mh(\ol{\Delta}_\rho(t))), \ \ \ds \lim_{n\to \infty} ||{\rm tor}H_1(X_n,\rho)||_p^{1/n}=\mh_p(\ol{\Delta}_\rho(t))\]
of torsion growth with use of Mahler measures hold, where $p$ is any prime number.  \\ 

This paper is organized as follows. 
Additional backgrounds and further questions are given in Section 2. 
We prove Theorem \ref{thm} through Sections 3--9. 
In Section 3, we recall that $\rho:\pi\to \GL_N(O)$ induces $\wh{\rho}:\wh{\pi}\to \GL_N(\wh{O})$. 
In Section 4, we define the twisted Alexander polynomial $\Delta_\rho^\alpha(t)$ and prove Proposition \ref{prop.Wada}. 
In Section 5, we recall the continuous homology $H_i(\wh{\Gamma},V_{\wh{\rho}})$ of profinite completions and paraphrase our question. 
In Section 6, we remark some properties of cyclotomic divisors in the profinite group ring $\wh{O}[[t^{\wh{\Z}}]]$, and reduce our question to the cases without cyclotomic divisors. 
In Section 7, we recall Hillar's result on cyclic resultant and obtain a lemma. 
In Section 8, we recall the norm map and prove Theorem \ref{thm} (1). 
In Section 9, we prove Theorem \ref{thm} (2) with use of Iwasawa modules, and give some remarks. 
In Section 10, we prove Propositions \ref{propA} and \ref{propB}, invoking some $p$-adic number theory. 

Section 11 is devoted to refinements of results in \cite{Ueki4} and \cite{TangeRyoto2018JKTR}; 
Fox's formula and torsion growth formulas with use of Mahler measures of the twisted homology groups in a $\Z$-cover of a 3-manifold. 
%$\rho$-twisted analogues of Fox's formula and the asymptotic formulas of torsion growth in a $\Z$-cover with use of Mahler measures. 
We recall backgrounds, state Theorem \ref{thm.mh} with some remarks, and prove them. 
In Section 12, we examine several examples of $\Delta_\rho(t)$ of Riley's parabolic representations of 2-bridge knot groups from viewpoints of Theorems \ref{thm} and \ref{thm.mh}, and give remarks on topological entropies. 
In Section 13, we briefly discuss hyperbolic volume and prove Corollary \ref{cor.vol}. 

\section{Preliminary} 
A question of profinite rigidity is a version of finite detection problem asking to what extent a topological invariant of a 3-manifold $M$ is \emph{determined} by the isomorphism class of the profinite completion $\wh{\pi}$ of the 3-manifold group $\pi=\pi_1(M)$. 
Latest progress due to Wilton--Zalesskii, Wilkes \cite{WiltonZalesskii2017GT, WiltonZalesskii2019CM, Wilkes2018JA, Wilkes2018NZ, BMRS2018} are on the geometries of 3-manifolds, the JSJ-decompositions, graph manifolds, and hyperbolic manifolds of a certain class. 
To be precise, \cite{BMRS2018} proved the profinite rigidity of certain arithmetic Kleinian groups amongst all finitely generated residually finite groups (the absolute profinite rigidity). 
In addition, Liu proved that mapping classes of a compact surface are almost determined by its finite quotient actions, using twisted Reidemeister torsions and twisted Lefschetz zeta functions \cite{YiLiu2019MCarXiv}. 
%Nevertheless, it is still unknown whether there exists a pair $(J,K)$ of distinct prime knots such that the profinite completions of their knot groups admit an isomorphism $\wh{\pi}_J \cong \wh{\pi}_K$. 
The profinite rigidity of knots was asked also by B.~Mazur from a viewpoint of the analogy between knots and prime numbers \cite{Mazur2012}. We refer to \cite{Ueki5}, \cite{BoileauFriedl2015}, and \cite{Reid-ICM2018} for further background. 

We proved in our previous paper \cite{Ueki5} that the isomorphism class of $\wh{\pi}_K$ determines the Alexander polynomial of any knot $K$ in $S^3$. 
Since the image of $\pi_K^{\rm ab}=t^\Z$ in the isomorphism class of $\wh{\pi}_K^{\rm ab}=t^\wh{\Z}$
is not specified, we needed to study the completed Alexander module over the completed group ring $\wh{\Z}[[t^\wh{\Z}]]=\varprojlim_{m, n} \Z/m\Z[t^{\Z/n\Z}]$. 

In this article, we extend this method to the twisted Alexander polynomial $\Delta_{\rho}^{\alpha}(t)$ of a representation $\rho:\pi\to \GL_N(O)$.
We remark that many important representations over $\C$ may be regarded as those over $O=O_{F,S}$ 
(e.g., the holonomy representations of a hyperbolic knot, \cite{CullerShalen1983}). 
The completed ring $\wh{O}$ is sufficiently large, while $\wh{\C}=\{0\}$. Hence we consider representations over $O$. 

We note that Boileau--Friedl \cite{BoileauFriedl2015} studied a profinite rigidity of the twisted Alexander polynomial 
associated to representations over a finite field, in ordered to prove the profinite rigidity of fiberedness of knots. 
They assumed a ``regular isomorphism'' so that the image of $t$ in $\wh{\pi}_K^{\rm ab}$ was assumed to be known. 
We address a slightly difficult problem here. 

The aim of this paper is to investigate to what extent $\Delta_{\rho}^{\alpha}(t)$ is (abstractly) 
recovered from the isomorphism class of the continuous representation $\wh{\rho}:\wh{\pi}\to \GL_N(\wh{O})$. 
Namely, suppose that $\pi$ runs through all discrete groups with $\alpha:\pi\surj t^\Z$. 
Let $\mca{R}$ denote the set of the pairs $(\rho, \alpha)$ of the representations $\rho:\pi\to \GL_N(O)$ and $\alpha$, 
$\mca{P}$ the set of twisted Alexander polynomials $\Delta_{\rho}^\alpha(t)$, 
and $\wh{\mca{R}}$ the set of isomorphism class of pairs $(\wh{\rho}, \wh{\alpha})$ of 
the profinite completions $\wh{\rho}:\wh{\pi}\to \GL_N(\wh{O})$ and $\wh{\alpha}:\wh{\pi}\surj t^{\wh{\Z}}$.  
Then the question is to what extent the correspondence $\mca{F}:\mca{R}\to \mca{P}$ factors through 
the profinite completion \ $\wh{}\ :\mca{R}\to \wh{\mca{R}}; (\rho,\alpha) \mapsto \text{the isomorphism class of }(\wh{\rho},\wh{\alpha})$. 

%We remark that Liu \cite{YiLiu2019MCarXiv} also studied 

Here we attach further questions, which may be addressed in a future study. 
\begin{q} \label{furtherq} 
(1) 
Recall that the fiberedness of a 3-manifold is a profinite property (\cite[Theorem 1.2]{BoileauFriedl2015}, \cite{BridsonReidWilton2017}, \cite{JaikinZapirain2017fibering}). 
Let $\rho_n$ denote the $n$-th higher holonomy representation for each $n>2$ of a hyperbolic knot $K$ (cf. Section \ref{sec.vol}). 
Then  $W_{\rho_n}(t)$'s are reciprocal. 
If $K$ is fibered, then $\tau_{\rho_n}(t)$'s are monic, while the converse is conjectured \cite{DunfieldFriedlJackson2012, Porti2018ALM}. 
If $\tau_{\rho_n(t)}$'s are monic, then so are $W_{\rho_n}(t)$'s. Is ${\rm Vol}(K)$ determined in this case? 

(2) To what extent the Reidemeister torsion $\tau_\rho(t)=\tau_{\rho\otimes \alpha}(S^3-K)$ is determined by the isomorphism class of $\wh{\rho}$? Does the argument for (1) extends to $\tau_\rho(t)$? 

(3) Does the set of all continuous representations of $\wh{\pi}$ determines ${\rm Vol}(K)$? 
When we study topology of knots by using twisted Alexander polynomials, difficulty lies in how to find a representation $\rho$ conveying topological information. In order to apply our theorems to a more rustic question of profinite rigidity, we need to combine another type of rigidity theorem finding a good $\rho$ among the set of all representations, such as mentioned by Francaviglia in \cite{Francaviglia2004IMRN}. 
Indeed, ${\rm Vol}(K)$ is the maximal value of volumes if $\rho$ runs through all geometric representation in a neighborhood of $\rho_{\rm hol}$. 
We may need an additional insight to obtain a similar nature over all continuous representation, or to detect a nice class of representations. 
\end{q}

\section{Continuous representations} %$\wh{\rho}:\wh{\pi}\to \GL_N(\wh{O})$} 

Let $\pi$ be a discrete group. Then the set of finite quotients $\{\pi/\varpi \mid \varpi\lhd \pi \text{\ of finite index}\}$ forms a directed inverse system with respect to the quotient maps. 
\emph{The profinite completion} $\wh{\pi}$ of $\pi$ is the inverse limit $\varprojlim_{\varpi \lhd \pi}\pi/\varpi$ as a group and endowed the weakest topology such that $\Ker(\wh{\pi}\surj \pi/\varpi)$ is an open subgroup for every normal subgroup $\varpi\lhd \pi$ of finite index. 
It is known that such an inverse system may be reconstructed from the set of isomorphism classes of finite quotients of $\pi$ \cite[Corollary 3.2.8]{RibesZalesskii2010}. 
A discrete group $\pi$ is said to be \emph{residually finite} if for any $g\in \pi$ there exists some normal subgroup $\varpi\lhd \pi$ of finite index such that the image of $g$ in the quotient $\pi/\varpi$ is non-trivial. 
It is equivalent to that the natural map $\iota: \pi\to \wh{\pi}$ is an injection. 
For any oriented connected closed 3-manifold $M$, the group $\pi=\pi_1(M)$ is residually finite (\cite{Hempel1987}+\cite{PerelmanGC1, PerelmanGC2, PerelmanGC3}). 

For a number field $F$, let $O_F$ denote the ring of integers of $F$ and let $S$ be a finite set of prime ideals of $O_F$. 
The ring $O_{F,S}$ of $S$-integers of $F$ is a Dedekind domain obtained by adding all inverse elements of primes in $S$ to $O_F$. For any $F$, there is some $S$ such that $O_{F,S}$ is a UFD, hence a PID. 
The profinite completion $\wh{O}$ of the ring $O=O_{F,S}$ is a topological ring which is defined as the inverse limit $\varprojlim_I O/I$ of the set of finite quotients $O/I$ as a ring and endowed with the profinite topology. 
We have an inverse system $\{\GL_N(O/I)\}_I$ of finite groups and an isomorphism of profinite groups $\GL_N(\wh{O})\cong \varprojlim_I \GL_N(O/I)$. 
%\\ 

Now let $\rho:\pi\to \GL_N(O)$ be a representation of a discrete group. 
The natural map $O\inj \wh{O}$ induces a representation $\rho:\pi\to \GL_N(\wh{O})$. 
By the universality of the profinite completion \cite[Lemma 3.2.1]{RibesZalesskii2010}, for any continuous homomorphism $f:\pi\to H$ to a profinite group, there is a unique continuous homomorphism $\wh{f}:\wh{\pi}\to H$ such that $f$ coincides with the composite $\wh{f}\circ \iota$ of the natural map $\iota: \pi\to \wh{\pi}$ and $\wh{f}$. 
Hence we have an induced continuous representation $\wh{\rho}:\wh{\pi}\to \GL_N(\wh{O})$. 
In other words, we have a natural map \[\wh{}\ :\Hom(\pi, \GL_N(O))\to \Hom(\wh{\pi}, \GL_N(\wh{O})).\] 
If $\pi$ is a residually finite group, then this map is an injection. 

\section{Twisted Alexander polynomials}  
%$\Delta_{\rho}(t)$}
We recall in Subsection \ref{subsec.twisted} the twisted Alexander polynomial and prepare for the proof of Theorem \ref{thm}. 
We give in Subsection \ref{subsec.Wada} additional information related to Remark \ref{rem.WDH} on Wada's invariant, and prove Proposition \ref{prop.Wada}. 

%\subsection{Twisted Alexander modules} \label{subsec.H} 

\subsection{Twisted Alexander polynomials} \label{subsec.twisted}
%Divisorial hull and Wada's invariant} 

Let $M$ be a finitely generated module over a commutative ring $R$. 
If $R^m\overset{A}{\To} R^n\surj M\to 0$ is a presentation of $M$, then 
\emph{the $i$-th Fitting ideal} ${\rm Fitt}_i M$ of $M$ over $R$ is defined as 
the ideal of $R$ generated by all $(n-i)$-minors of $A\in M_{n,m}(R)$, 
which is known to be independent of the choices of a presentation. 
If $r$ is the lowest $i$ with ${\rm Fitt}_i M\neq 0$, then we simply write ${\rm Fitt} M={\rm Fitt}_r M$ and call it \emph{the Fitting ideal} of $M$. 

\emph{The divisorial hull} (or \emph{the reflexive hull}) $\wt{a}$ of an ideal $\mf{a}$ of a ring $R$ is the intersection of all principal ideals containing $\mf{a}$. 
If $R$ is an integrally closed Noether domain (e.g., $R=O[t^\Z]$) and $\mf{a}\neq 0$, then we have $\mf{a}=\cap_\p \mf{a}_{(\mf{p})}$, where $\p$ runs through all prime ideals of height one and $\mf{a}_{(\p)}$ denotes the localization at $\mf{p}$ \cite[Lemma 3.2]{Hillman2}. 
If $R$ is a UFD, then  $\wt{\mf{a}}$ is a principal ideal, generated by the highest common factor of the elements of $\mf{a}$.

Now let $\pi$ be a discrete group of finite type with a surjective homomorphism $\alpha:\pi\surj t^\Z$ 
and a representation $\rho:\pi\to \GL_N(O)$ over $O=O_{F,S}$. 
Let $V_{\rho}$ denote the module $O^n$ regarded as a right $\pi$-module via the transpose of $\rho$. 
Then $\Gamma:=\Ker \alpha$ acts on $V_{\rho}$ via the restriction of $\rho$. 

Let $\mca{F}$ be a projective resolution of $\Z$ over $\Z[\Gamma]$. Then the homology $H_i(\Gamma,V_\rho)$  with local coefficients is defined to be the homology of the complex $\mca{F}\otimes_{\Gamma} V_\rho$. 
We write $H_i(\Gamma,V_\rho)=H_i(\Gamma,\rho)$ for simplicity. 
The conjugate action of $\pi$ on $\Gamma$ induces the action of $\Im (\alpha)=t^\Z\cong \pi/\Gamma$ on $H_i(\Gamma,\rho)$ \cite[Chapter III, Corollary 8.2]{Brown}. 

Let $X$ be an Eilenberg--MacLane space $K(\pi,1)$ of $\pi$ (e.g., the knot exterior if $\pi$ is a knot group), 
and let $X_\infty\to X$ denote the $\Z$-cover corresponding to $\Gamma$. 
Then we have a natural isomorphism $H_i(\Gamma, \rho)\cong H_i(X_\infty, \rho)$ of finitely generated $O[t^\Z]$-modules for $i=0,1$. 
We also note that Shapiro's lemma \cite[III, Proposition 6.2]{Brown} yields $H_i(\Gamma, \rho)\cong H_i(\pi, \rho\otimes \alpha)$ and $H_i(X_\infty, \rho)\cong H_i(X, \rho\otimes \alpha)$ for the tensor representation $\rho\otimes \alpha:\pi\to \GL_N(O[t^\Z])$.

If $\wt{\rm Fitt}H_i(\Gamma, \rho)$ is a principal ideal, then \emph{the $i$-th twisted Alexander polynomial} $\Delta_{\rho,i}^\alpha(t)$ is defined as a generator $\Delta_{\rho,i}^\alpha(t)$ of this ideal, which is well-defined up to $\dot{=}$ in $O[t^\Z]$. 
This $\Delta_{\rho,i}^\alpha(t)$ is known to be an invariant of the isomorphism class of $(\rho,\alpha)$. We sometime omit $i$ and $\alpha$.

\subsection{Lin--Wada's invariant $W_\rho^\alpha(t)$} \label{subsec.Wada} 
Lin--Wada's invariant $W_{\rho}^\alpha(t) \in F(t)$ was initially introduced by Lin \cite{Lin2001AMS} and Wada \cite{Wada1994}. 
It may be defined as the ratio of generators of $\wt{\rm Fitt}$ of \emph{based Alexander modules}, 
hence defined up to $\dot{=}$ over $O[t^\Z]$. 
If $\pi$ is of deficiency one (e.g., a knot group), then $W_\rho^\alpha(t)$ turns out to be a polynomial in $F[t^\Z]$. 
If a presentation of $\pi$ is given, then $W_{\rho}^\alpha(t)$ is explicitly calculated by using Fox differential 
\cite{Wada1994, KirkLivingston1999T1, DunfieldFriedlJackson2012, FriedlVidussi2011survey}. 
If $\Delta_{\rho,i}^\alpha(t)$ are defined for $i=0,1$, then we have $W_{\rho}^\alpha(t)=\Delta_{\rho,1}^\alpha(t)/\Delta_{\rho,0}^\alpha(t)$. 
If $O$ is not a UFD, then $\wt{{\rm Fitt}}H_i(\Gamma, \rho)$ is not necessarily a principal ideal, so that $\Delta_{\rho,i}^\alpha(t)$ may not be defined. Nevertheless, we have the following. 

\begin{prop} \label{prop.Wada} 
Suppose that $\pi$ is of deficiency one (e.g., a knot group). 
Then the fractional ideal $\wt{{\rm Fitt}}H_1(\Gamma,\rho)/\wt{{\rm Fitt}}H_0(\Gamma,\rho)$ in the ideal group of $O[t^\Z]$ is a principal ideal generated by Wada's polynomial $W_\rho^\alpha(t)$. 
\end{prop}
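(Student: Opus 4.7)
The plan is to pass to a deficiency-one presentation of $\pi$, write out the twisted Fox chain complex, reduce the asserted equality to a local statement at each height-one prime of $R:=O[t^\Z]$, and verify it there via Smith normal form together with Jacobi's complementary-minor identity.

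First I would choose a deficiency-one presentation $\pi=\langle x_1,\ldots,x_g\mid r_1,\ldots,r_{g-1}\rangle$ and recall that $H_i(\Gamma,\rho)$ for $i=0,1$ is computed by the free $R$-complex
\[
R^{N(g-1)}\xrightarrow{\,D_2\,}R^{Ng}\xrightarrow{\,D_1\,}R^N,
\]
where $D_1$ has the blocks $b_j=(\rho\otimes\alpha)(x_j)-I$ and $D_2$ has the blocks $(\rho\otimes\alpha)(\partial r_i/\partial x_j)$, with $D_1D_2=0$ coming from Fox's formula and $r_i=1$. With this notation, Wada's polynomial equals $W_\rho^\alpha(t)=\det J_k/\det b_k$, where $J_k$ is $D_2$ after deleting its $k$-th block of $N$ rows. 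Since $R$ is a Noetherian integrally closed domain, a divisorial fractional ideal is determined by its valuations at the height-one primes of $R$ \cite[Lemma 3.2]{Hillman2}, so it suffices to verify, at each such prime $\mf{p}$ with valuation $v_\mf{p}$ on the DVR $R_\mf{p}$, the identity
\[
v_\mf{p}({\rm Fitt}_0H_1(\Gamma,\rho))-v_\mf{p}({\rm Fitt}_0H_0(\Gamma,\rho))=v_\mf{p}(W_\rho^\alpha(t)).
\]

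Next I would localize to $R_\mf{p}$ and place $D_1$ in Smith normal form: $UD_1V=[D,\,0]$ with $U\in\GL_N(R_\mf{p})$, $V=[V_A\mid V_B]\in\GL_{Ng}(R_\mf{p})$ (and $V_A$ of size $Ng\times N$), for some diagonal $D$. The columns of $V_B$ then form an $R_\mf{p}$-basis of $\ker D_1$, and the unique matrix $\bar D_2\in M_{N(g-1)}(R_\mf{p})$ satisfying $D_2=V_B\bar D_2$ is a presentation matrix for $H_1(\Gamma,\rho)\otimes R_\mf{p}$. Consequently $v_\mf{p}({\rm Fitt}_0H_0)=v_\mf{p}(\det D)$ and $v_\mf{p}({\rm Fitt}_0H_1)=v_\mf{p}(\det\bar D_2)$, so the claim reduces to the congruence $\det\bar D_2/\det D\equiv\det J_k/\det b_k$ modulo $R_\mf{p}^\times$.

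Finally, this last congruence I would establish from three identities. First, deleting the $k$-th block of rows from $D_2=V_B\bar D_2$ yields $J_k=(V_B)_{\hat k}\bar D_2$ and hence $\det J_k=\det(V_B)_{\hat k}\cdot\det\bar D_2$. Second, writing $V^{-1}=\spmx{W\\W'}$ with $W$ of size $N\times Ng$, the identity $D_1=U^{-1}DW$ gives $b_k=U^{-1}DW_k$ and hence $\det b_k=(\det U)^{-1}\det D\cdot\det W_k$. Third, Jacobi's complementary-minor identity applied to $V\cdot V^{-1}=I$ gives $\det W_k\cdot\det V=\pm\det(V_B)_{\hat k}$. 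Assembling these three relations and using $\det U,\det V\in R_\mf{p}^\times$ produces the required congruence. The main obstacle will be the careful bookkeeping of signs and block indexing in the Jacobi step; once that linear-algebra identity is correctly set up, the remainder is a direct assembly of the three formulas above.
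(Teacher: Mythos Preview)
Your argument is correct but follows a genuinely different route from the paper. The paper invokes the four-term exact sequence $0\to H_1(\Gamma,\rho)\to\mca{A}\to\mca{C}\to H_0(\Gamma,\rho)\to 0$ of Silver--Williams \cite{SilverWilliams2009TA}, where $\mca{A},\mca{C}$ are the \emph{based} Alexander modules whose Fitting ideals are by construction principal (generated respectively by the numerator and denominator of $W_\rho^\alpha$); it then applies a multiplicativity lemma for divisorial Fitting ideals in short exact sequences (Lemma~\ref{Hillman}, an extension of \cite[Theorem~3.12]{Hillman2}) twice to obtain $\wt{\rm Fitt}H_1\cdot{\rm Fitt}\,\mca{C}={\rm Fitt}\,\mca{A}\cdot\wt{\rm Fitt}H_0$, from which the claim is immediate. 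Your approach bypasses the based modules and the multiplicativity lemma entirely: you localize at height-one primes, compute both sides explicitly via Smith normal form over the resulting DVR, and match them through Jacobi's complementary-minor identity. The paper's proof is shorter and more conceptual once the Silver--Williams sequence and Hillman's lemma are in hand; yours is more self-contained and makes the linear-algebraic mechanism behind Wada's ratio transparent, at the cost of the block-index bookkeeping you flag in the Jacobi step. One small point worth making explicit in your write-up: your claim that the columns of $V_B$ span $\ker D_1$ uses that $D$ has full rank $N$, i.e.\ that $H_0(\Gamma,\rho)$ is $R$-torsion, and the cancellation of $\det W_k$ requires $\det b_k\neq 0$; both are implicit in the hypothesis that $W_\rho^\alpha(t)$ is defined, but they deserve a sentence.
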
 

Hillman's theorem \cite[Theorem 3.12 (3)]{Hillman2} immediately extends to the following lemma. 

\begin{lem} \label{Hillman} 
Let $0\to K\to M\to C\to0$ be an exact sequence of modules over a Noether ring $R$ and suppose $r={\rm rank}(C)$ and $s={\rm rank}(K)$. 
If $K$ is a torsion module, then $\wt{{\rm Fitt}}_{r+s}(M)=\wt{{\rm Fitt}}_s(K)\wt{{\rm Fitt}}_r(C)$ holds.
\end{lem}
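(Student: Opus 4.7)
The plan is to reduce the claim to a local check at height-one primes, where the ring becomes a DVR and Fitting ideals can be read off from Smith normal form. Since $K$ is torsion, $s=\rank K=0$ and $r+s=r=\rank M$, so the asserted identity reduces to $\wt{{\rm Fitt}}_r(M)=\wt{{\rm Fitt}}(K)\,\wt{{\rm Fitt}}_r(C)$, where $\wt{{\rm Fitt}}(K)=\wt{{\rm Fitt}}_0(K)$ is the smallest nonzero divisorial Fitting ideal of the torsion module $K$.

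First I would invoke the formula $\wt{\mf{a}}=\bigcap_{\p}\mf{a}_{(\p)}$ recalled in Section 4.1 (which holds for any nonzero ideal over the integrally closed Noetherian domain $R=O[t^\Z]$, where $\p$ runs over height-one primes), together with the fact that Fitting ideals commute with localization. It then suffices to prove the equality after localizing at every height-one $\p$. There $R_{(\p)}$ is a DVR with uniformizer $\pi$, so by the elementary divisor theorem $C_{(\p)}\cong R_{(\p)}^r\oplus T$ for a finite-length torsion module $T$, and the free summand lifts to give a splitting $M_{(\p)}\cong R_{(\p)}^r\oplus M'$ with an induced short exact sequence $0\to K_{(\p)}\to M'\to T\to 0$ of torsion modules.

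Next I would compute the localized Fitting ideals. Using the standard formula ${\rm Fitt}_i(N_1\oplus N_2)=\sum_{j+k=i}{\rm Fitt}_j(N_1)\,{\rm Fitt}_k(N_2)$ and the vanishing ${\rm Fitt}_j(R_{(\p)}^r)=0$ for $j<r$, the computation collapses to ${\rm Fitt}_r(M_{(\p)})={\rm Fitt}_0(M')$ and ${\rm Fitt}_r(C_{(\p)})={\rm Fitt}_0(T)$. For any finite-length module $N$ over the DVR $R_{(\p)}$, one has ${\rm Fitt}_0(N)=(\pi^{\ell(N)})$ where $\ell$ denotes length, and since length is additive on short exact sequences, ${\rm Fitt}_0(M')={\rm Fitt}_0(K_{(\p)})\cdot{\rm Fitt}_0(T)$. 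Assembling these identities yields the local equality, and intersecting over all height-one primes gives the global statement.

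The main obstacle is merely the bookkeeping of indices and the choice of local splittings; the crucial step — that the free summand $R_{(\p)}^r\hookrightarrow C_{(\p)}$ lifts to a summand of $M_{(\p)}$ — is automatic because free modules over a PID are projective. Once the problem is reduced to torsion modules over a DVR, the crux is the additivity of length packaged through the $0$-th Fitting ideal, which is what Hillman's original argument exploits; the only difference here is replacing $\Z[t^{\pm 1}]$ by the Krull domain $O[t^\Z]$, and this substitution is absorbed by the height-one localization step.
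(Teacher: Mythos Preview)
Your argument is correct and shares the paper's overall strategy: reduce the divisorial-hull identity to a check at height-one primes, where the localized ring is a DVR. The local computations differ, however. The paper constructs a block-triangular presentation matrix $\left(\begin{smallmatrix}P(K)&0\\ *&P(C)\end{smallmatrix}\right)$ for $M$ out of presentations for $K$ and $C$ (using that projective dimension is $\leq 1$ over the DVR) and reads off the relevant minors directly; you instead split off the free summand $R_{(\p)}^r$ via projectivity and finish with additivity of length on the residual torsion sequence $0\to K_{(\p)}\to M'\to T\to 0$. Your route is arguably more transparent for a reader who thinks via the structure theorem over a PID, while the paper's presentation-matrix argument is closer to Hillman's original and does not require first setting $s=0$. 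Both arguments implicitly need $R$ to be a Krull domain so that height-one localizations are DVRs---a point you correctly flag by specializing to $R=O[t^{\Z}]$, and one the paper's proof also tacitly uses.
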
 
\begin{proof}
Since an ideals in a Noether ring $R$ is determined by localization at every maximal ideals, we may assume that $R$ is a local ring, so that every projective $R$-module is free. 
In order to prove the claim for the divisorial hull, it suffices to show ${\rm Fitt}_{r+s}(M)={\rm Fitt}_s(K){\rm Fitt}_r(C)$ for the localization at every prime ideal of hight one. 
If the projective dimension of $K$ and $C$ are less than one, we have 
presentation matrixes $P(K)\in {\rm M}_{k+s,k}(R)$ of rank $s$ for $K$ and $P(C)\in {\rm M}_{c+r,c}(R)$ of rank $r$ for $C$, 
hence $P(M)=\spmx{P(K)&0\\ \ast&P(C)}$ of rank $r+s$ for $M$. This proves the equality for the localization, hence the claim. \end{proof}

\begin{proof}[Proof of Proposition \ref{prop.Wada}] 
By the proof of \cite[Proposition 3.6]{SilverWilliams2009TA} (see also \cite[Theorem 4.1]{KirkLivingston1999T1}, \cite[Theorem 11]{HillmanLivingstonNaik2006}), we have an exact sequence 
$$0\to H_1(\Gamma,\rho)\to \mca{A}\to \mca{C}\to H_0(\Gamma,\rho)\to 0$$
of torsion $O[t^\Z]$-modules for \emph{based Alexander modules} $\mca{A}$ and $\mca{C}$.
The Fitting ideals of $\mca{A}$ and $\mca{C}$ are known to be principal ideals. 
Indeed, this $\mca{A}$ admits a square presentation and ${\rm Fitt}(\mca{A})$ is a principal ideal generated by the numerator of Wada's invariant.

By using Lemma \ref{Hillman} twice, we obtain $\wt{{\rm Fitt}}H_1(\Gamma,\rho)\cdot {\rm Fitt}\mca{C}={\rm Fitt}\mca{A}\cdot \wt{{\rm Fitt}}H_0(\Gamma,\rho)$ of ideals in $O[t^\Z]$. 
Therefore, we have the equality of fractional ideals $\wt{{\rm Fitt}}H_1(\Gamma,\rho)/\wt{{\rm Fitt}}H_0(\Gamma,\rho)={\rm Fitt}\mca{A}/{\rm Fitt}\mca{C}=(W_\rho(t))$. \end{proof} 

Wada's invariant is defined for any discrete group $\pi$ of finite type. 
Even if we do not assume that ${\rm Fitt}\mca{A}$ and ${\rm Fitt}\mca{C}$ are principal ideals, we still have the equality 
$\wt{{\rm Fitt}}H_1(\Gamma,\rho)/\wt{{\rm Fitt}}H_0(\Gamma,\rho)=\wt{\rm Fitt}\mca{A}/\wt{\rm Fitt}\mca{C}$.  

\section{Continuous homology and profinite completions} 
In this section, we recall the continuous homology $H_i(\wh{\Gamma}, V_{\wh{\rho}})$ and paraphrase our question.

Let $\pi$, $\alpha:\pi\surj t^\Z$, and $\rho:\pi \to \GL_N(O)$ be as in Subsection \ref{subsec.twisted}. %
Let $V_{\wh{\rho}}$ denote the module $\wh{O}^n$ regarded as a right $\wh{\pi}$-module $\wh{O}^n$ via the transpose of the continuous representation $\wh{\rho}:\wh{\pi}\to \GL_N(\wh{O})$. 
The homology of the profinite group $\wh{\Gamma}$ with coefficient being the profinite module $V_\wh{\rho}$ is defined as the continuous homology of the complex $\wh{\mca{F}}\wh{\otimes}_{\wh{\Gamma}}V_{\wh{\rho}}$, where $\wh{\mca{F}}$ is a  continuous projective resolution of $\wh{\Z}$ over the profinite group ring $\wh{\Z}[[\wh{\Gamma}]]$. 
If $G$ is a finite discrete group and $B$ a finite ring, then the continuous homology coincides with the usual group homology. 

Let $J$ be an ordered set, $(\pi_j)_{j \in J}$ an inverse system of profinite groups, and $(B_j)_{j \in J}$ an inverse system of profinite abelian groups. If $\pi=\varprojlim \pi_j$ and $B=\varprojlim_j B_j$, then for each $i\geq 0$, we have an isomorphism $H_i(\pi,B)\cong \varprojlim_j H_i(\pi_j, B_j)$ \cite[Proposition 6.5.7]{RibesZalesskii2010}. 

Since the set of normal subgroups $G$ of $\Gamma$ of finite index and the set of ideals $I$ of $O$ of finite index are ordered sets with respect to the inclusions, by using the assertion above twice, we obtain isomorphisms 
\[\ds H_i(\wh{\Gamma}, V_{\wh{\rho}})\cong \varprojlim_{G\lhd \Gamma} H_i(\Gamma/G, V_{\wh{\rho}})\cong \varprojlim_{G\lhd \Gamma} \varprojlim_{I} H_i(\Gamma/G, V_{\rho \mod I}).\] 
Since every $H_i(\Gamma/G, V_{\rho \mod I})$ is a quotient of a common finitely generated $O[t^\Z]$-module $H_i(\Gamma, \rho)$, the module $H_i(\wh{\Gamma}, V_{\wh{\rho}})$ is a finitely generated $\wh{O}[[t^\wh{\Z}]]$-module, where $\wh{O}[[t^\wh{\Z}]]$ denotes the profinite completion of the group ring $O[t^\Z]$.\\

Now recall that our aim is to reconstruct $\Delta_\rho(t)=\Delta_{\rho,i}^\alpha(t)$ from the isomorphism class of $(\wh{\rho}, \wh{\alpha}$). 
For the induced map $\wh{\alpha}:\wh{\pi}\surj t^\wh{\Z}$ and $\Gamma=\Ker \alpha$, we have $\Ker \wh{\alpha}=\wh{\Gamma}$. 
The closure of $\wt{{\rm Fitt}}H_i(\Gamma, \rho)$ in $\wh{O}[[t^{\wh{\Z}}]]$ coincides with the ideal $\wt{{\rm Fitt}}H_i(\wh{\Gamma}, \wh{\rho})$ of the continuous homology. 
Let $s$ be an arbitrary generator of $t^{\wh{\Z}}$. Then there exists some $v\in \wh{\Z}^*$ with $t=s^v$. 
Since $\wh{O}[[t^\wh{\Z}]]=\wh{O}[[s^\wh{\Z}]]$, the module $H_i(\wh{\Gamma}, V_{\wh{\rho}})$ is a finitely generated $\wh{O}[[s^\wh{\Z}]]$-module with $\wt{\rm Fitt}$ being $(\Delta_\rho(s^v))$, supposing that $\Delta_\rho(t)$ is defined.  

Hence we ask to what extent $\Delta_\rho(t)\,\dot{=}\,g(t)$ holds for $g(t)\in O[t^\Z]$ and $v'\in \wh{\Z}^*$ with $(\Delta_\rho(s^v))=(g(s^{v'}))$. 
By replacing $s^v$ and $v'/v$ by $s$ and $v$, we may paraphrase our question as follows. 

\begin{q} \label{question} Let $f(t), g(t) \in O[t^\Z]$ and $v\in \wh{\Z}^*$ with an equality $(f(t))=(g(t^v))$ of ideals in $\wh{O}[[t^\wh{\Z}]]$. To what extent does $f(t)\,\dot{=}\,g(t)$ hold? 
\end{q} 

\section{Cyclotomic divisors} \label{Sec.Cyclotomic}
For each positive integer $m$, the $m$-th cyclotomic polynomial $\Phi_m(t)$ is the minimal polynomial of a primitive $m$-th root of unity over $\Q$. It is known that $\Phi_m(t)\in \Z[t]$, that $\Phi_m(t)$ vanishes at every primitive $m$-th root of unity, and that $\prod_{m|n} \Phi_m(t)=t^n-1$ holds. 

By a similar argument to \cite[Section 3]{Ueki5}, we obtain the following properties. 

\begin{prop} \label{prop.cyclotomic}
Let $O=O_{F,S}$ be the ring of $S$-integers of a number field $F$ and regard $O[t^\Z]\subset \wh{O}[[t^{\wh{\Z}}]]$. 

{\rm (1)} Any polynomial $0\neq f(t) \in O[t^{\Z}]$ is not a zero divisor of $\wh{O}[[t^\wh{\Z}]]$.

{\rm (2)} For any cyclotomic polynomial $\Phi_m(t)$ and a unit $v\in \wh{\Z}^*$, the ratio $\Phi_m(t^v)/\Phi_m(t)$ is defined and is a unit of $\wh{O}[[t^\wh{\Z}]]$. 

{\rm (3)} Let $f(t), g(t)\in O[t^\Z]$ and $v\in \wh{\Z}^*$ with an equality $(f(t))=(g(t^v))$ of ideals of $\wh{O}[[t^\wh{\Z}]]$. Then $\Phi_m(t)|f(t)$ holds if and only if $\Phi_m(t)|g(t)$ holds, and these conditions imply the equality $(f(t)/\Phi_m(t))=(g(t^v)/\Phi_m(t^v))$ of ideals in $\wh{O}[[t^{\wh{\Z}}]]$. 
\end{prop}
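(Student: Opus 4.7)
The plan is to follow the template of \cite[Section 3]{Ueki5}, where the case $O=\Z$ was treated, and to adapt each step to the $S$-integer coefficients $O=O_{F,S}$. The key structural input is the product decomposition
\[
\wh{O}[[t^{\wh{\Z}}]] \;\cong\; \prod_{\mf{p}} O_{\mf{p}}[[t^{\wh{\Z}}]],
\]
where $\mf{p}$ ranges over the maximal ideals of $O$, combined with the further splitting $t^{\wh{\Z}} \cong t^{\Z_p} \times t^{\wh{\Z}^{(p')}}$ for each $\mf{p}$ above a rational prime $p$. After a faithfully flat extension of $O_{\mf{p}}$ adjoining prime-to-$p$ roots of unity, a character decomposition of the prime-to-$p$ factor reduces each $O_{\mf{p}}[[t^{\wh{\Z}}]]$ to a product of Iwasawa-type rings $R[[T]]$, where $R$ is a complete DVR containing a fixed root of unity $\zeta$ and $T=t-1$ is the variable on the pro-$p$ direction. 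Each such $R[[T]]$ is a complete regular local domain, and under this reduction an element $f(t)\in O[t^\Z]$ maps to the power series $f(\zeta(1+T))$.

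For (1), if $0\neq f(t)\in O[t^\Z]$, then in each local factor $R[[T]]$ the image $f(\zeta(1+T))$ is a nonzero formal power series (the Laurent polynomial $f$ cannot vanish identically on the one-parameter family $t=\zeta(1+T)$), hence a non-zero-divisor in a domain. Consequently $f(t)$ is a non-zero-divisor in each component, and therefore in $\wh{O}[[t^{\wh{\Z}}]]$.

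For (2), the substitution $t\mapsto t^v$ with $v\in \wh{\Z}^*$ is a continuous ring automorphism of $\wh{O}[[t^{\wh{\Z}}]]$ that, in the local factor with $\zeta=\zeta_m$ a primitive $m$-th root, acts by $\zeta_m\mapsto \zeta_m^{v_m}$ and $1+T\mapsto (1+T)^{v_p}$, where $v_m\in (\Z/m\Z)^*$ and $v_p\in \Z_p^*$ are the projections of $v$. A direct expansion yields $\Phi_m(\zeta_m(1+T))=T\cdot u(T)$ and $\Phi_m(\zeta_m^{v_m}(1+T)^{v_p})=T\cdot u_v(T)$ with $u,u_v$ units (the key input being $(1+T)^{v_p}-1=v_p T+\cdots$ with $v_p\in \Z_p^*$), while in the remaining local factors both expressions are already units since $\Phi_m$ evaluated at a non-primitive-$m$-th root is a unit. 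Thus $\Phi_m(t^v)/\Phi_m(t)$ is a unit in each local factor, hence a unit in $\wh{O}[[t^{\wh{\Z}}]]$.

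For (3), by (1) I can write $f(t)=u\cdot g(t^v)$ for a unit $u$. If $\Phi_m(t)\mid f(t)$ in $O[t^\Z]$, this gives $\Phi_m(t)\mid g(t^v)$ in $\wh{O}[[t^{\wh{\Z}}]]$; by (2) this is equivalent to $\Phi_m(t^v)\mid g(t^v)$, and applying the automorphism $t\mapsto t^{v^{-1}}$ yields $\Phi_m(t)\mid g(t)$ in $\wh{O}[[t^{\wh{\Z}}]]$. Since $\Phi_m(t)$ is monic and the divisibility can be detected by evaluation at a primitive $m$-th root in a single local factor, this descends to $\Phi_m(t)\mid g(t)$ in $O[t^\Z]$. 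The reverse implication is symmetric, and the ideal equality $(f(t)/\Phi_m(t))=(g(t^v)/\Phi_m(t^v))$ then follows by dividing $(f(t))=(g(t^v))$ by $(\Phi_m(t))=(\Phi_m(t^v))$. The principal technical obstacle I anticipate is the case where the residue characteristic $p$ divides $m$, in which $\Phi_m$ factors nontrivially modulo $p$ and the local decomposition demands extra bookkeeping; this is handled prime-by-prime within the Iwasawa-algebra framework and does not disturb the formal identities underlying (1)--(3).
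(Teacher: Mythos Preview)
Your proposal is correct and takes essentially the same approach as the paper: the paper gives no standalone proof of this proposition but simply states that it follows ``by a similar argument to \cite[Section 3]{Ueki5}'', and you explicitly adopt that template, carrying the $\wh{\Z}$-case over to $O=O_{F,S}$ via the decomposition $\wh{O}\cong\prod_{\mf p}O_{\mf p}$ and the Iwasawa-algebra picture. Your write-up in fact supplies more detail than the paper itself; the only spot to tighten is the descent step in (3), where ``evaluation at a single local factor'' should really be read as passing to the quotient $\wh{O}[[t^{\wh{\Z}}]]/(\Phi_m(t))\cong \wh{O}\otimes_\Z \Z[t]/(\Phi_m(t))$ and using that $O[t]/(\Phi_m(t))$ injects into its profinite completion.
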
 

By these assertions, the proof of Theorem \ref{thm} (1) and Question \ref{question} reduce to the cases without cyclotomic divisors. 
Note that if $\Phi_m(t)$ decomposes in $O[t]$, then a divisor of $\Phi_m(t)$ may appear as a divisor of $f(t)$. 
Theorem \ref{thm} (1) cannot distinguish divisors of $\Phi_m(t)$, while (2) may tell more information about the multiplicity. 

\begin{eg} \label{eg.cyclotomic}
Suppose that $\sin 2\pi/5 \in O \subset \C$, and let $\zeta=\zeta_5$ be a 5th root of unity. Then we have a decomposition $\Phi_5(t)=\phi_5^+(t)\phi_5^-(t)$ for $\phi_5^+(t)=(t-\zeta)(t-\zeta^4)$ and $\phi_5^-(t)=(t-\zeta^2)(t-\zeta^3)$ in $O[t]$.
Since the images of $\phi_5^+(t)$ and $\phi_5^-(t)$ under $O[t^\Z]\surj O[t^\Z]/(t-\zeta)$ do not coincide, $\phi_5^+(t)\,\dot{=}\,\phi_5^-(t)$ does not hold. 
However, we have the equality $(\phi_5^+(t))=(\phi_5^-(t^v))$ of ideals in $\wh{O}[[t^{\wh{\Z}}]]$ for some unit $v\in \wh{\Z}$.
Hence we cannot distinguish these polynomials. 
\end{eg} 

If such an ambiguity comes from an automorphism of $\pi$, then it is not essential for our Question \ref{question}, while multiplicity of divisors should be cared. See also Examples \ref{eg.Riley} (iii). 

\section{Cyclic resultants and Hillar's theorem} \label{Sec.Hillar} 

For each positive integer $n$, 
the $n$-th cyclic resultant $r_n$ of a polynomial $f(t)\in \C[t]$ is defined as the resultant of $f(t)$ and $t^n-1$. We have $r_n=\prod_{\zeta^n=1} f(\zeta)$. 
If $f(t)\in O[t]$, then we have $r_n\in O$. We call $|r_n|$ the $n$-th cyclic resultant absolute value of $f(t)$. (We consult \cite{Ueki4} for the general definition with more detailed properties.) 

The following result due to Hillar is a generalization of Fried's deep result \cite[Proposition]{Fried1988}: 
\begin{prop}{\rm \cite[Theorem 1.8]{Hillar2005}.} \label{prop.Hillar}
Polynomials $f(t), g(t)\in \R[t]$ have the same sequence of non-zero cyclic resultant absolute values if and only if there exists $u(t), v(t)\in \C[t]$ with $u(0)\neq 0$ and $l_1,l_2\in \Z_{\geq0}$ satisfying $f(t)=\pm t^{l_1}v(t)u(t^{-1})t^{\deg (u)}$, and $g(t)=t^{l_2}v(t)u(t)$. 
\end{prop}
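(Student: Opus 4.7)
The plan is to work at the level of roots. Writing $f(t)=a\prod_i(t-\alpha_i)$ with all $\alpha_i\neq 0$, one has
\[|r_n(f)|=|a|^n\prod_i |\alpha_i^n-1|,\]
since $\prod_{\zeta^n=1}(\zeta-\alpha)=(-1)^n(\alpha^n-1)$. Monomial factors contribute $|\prod_{\zeta^n=1}\zeta^l|=1$, so powers of $t$ are invisible to the sequence $(|r_n|)_n$; I would therefore reduce to the case $f(0)g(0)\neq 0$ and treat the exponents $l_1,l_2$ and the sign $\pm$ as harmless bookkeeping from the outset.

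For the ``if'' direction I would use multiplicativity $|r_n(uv)|=|r_n(u)|\,|r_n(v)|$ together with the following elementary identity. For $u^*(t):=t^{\deg u}u(t^{-1})$,
\[|r_n(u^*)|=\Bigl|\prod_{\zeta^n=1}\zeta^{\deg u}u(\zeta^{-1})\Bigr|=\Bigl|\prod_{\eta^n=1}u(\eta)\Bigr|=|r_n(u)|,\]
where I used $|\zeta|=1$ and the fact that $\zeta\mapsto\zeta^{-1}$ is a bijection on the $n$-th roots of unity. Combined with the common factor $v(t)$ this gives $|r_n(f)|=|r_n(g)|$ at once.

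The ``only if'' direction is the substantive step, and I would attack it by recovering the roots of $f,g$ (up to the involution $u\leftrightarrow u^*$) from the asymptotics of $|r_n|$. Following Fried, the roots $\alpha$ with $|\alpha|\neq 1$ control the exponential behavior of $|r_n|$ via their Mahler contributions $\max(1,|\alpha|)^n$; and because $\alpha$ and $1/\alpha$ yield the same sequence of absolute cyclic resultants (the leading coefficient exactly compensating reciprocation, as seen from $|r_n(t-\alpha)|=|r_n(1-\alpha t)|$), the off-unit-circle spectrum can only be reconstructed up to reciprocation, which is precisely the ambiguity $u\leftrightarrow u^*$ in the conclusion. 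Once the off-circle data of $f$ and $g$ are matched and cancelled, one is reduced to the identity $\prod_i|\alpha_i^n-1|=\prod_j|\beta_j^n-1|$ for all $n$ with both sides nonzero, where now $|\alpha_i|=|\beta_j|=1$. The hard part, and the essential extension beyond Fried, is to upgrade this multiplicative identity to equality of multisets $\{\alpha_i\}=\{\beta_j\}$, which would give the common factor $v(t)$. I would attempt this via a Diophantine input of Mann type, controlling multiplicative relations among values $\alpha^n-1$ for algebraic numbers $\alpha$ of modulus one, in order to rule out accidental coincidences across distinct Galois orbits on the unit circle.
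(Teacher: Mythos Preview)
The paper does not prove this proposition at all: it is quoted verbatim from \cite{Hillar2005} and introduced as ``a generalization of Fried's deep result'' with no argument supplied. So there is no ``paper's own proof'' to compare against; the paper treats this as a black box.

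As for your sketch itself: the ``if'' direction is fine and is indeed the easy half. Your ``only if'' direction, however, has a real gap. You correctly identify that the off-circle roots can only be pinned down up to reciprocation, and that the remaining content is a statement about roots on the unit circle. But your treatment of that remaining case is not a proof: ``I would attempt this via a Diophantine input of Mann type'' is an expression of hope, not an argument. Mann's theorem concerns vanishing sums of roots of unity, and it is not clear how to leverage it to deduce that $\prod_i|\alpha_i^n-1|=\prod_j|\beta_j^n-1|$ for all admissible $n$ forces equality of the multisets $\{\alpha_i\}$ and $\{\beta_j\}$ when the $\alpha_i,\beta_j$ are arbitrary algebraic numbers on $|z|=1$ (not roots of unity). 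This is exactly the step that makes the theorem nontrivial, and neither Fried's original argument (which assumed reciprocality) nor a bare invocation of Mann covers it.

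Hillar's actual proof proceeds quite differently: rather than isolating roots by asymptotics, he encodes the sequence $(r_n)$ as a rational generating function in one variable, and analyzes the resulting identity of rational functions to extract the factorization $f=\pm t^{l_1}v(t)u(t^{-1})t^{\deg u}$, $g=t^{l_2}v(t)u(t)$ directly. If you want to supply an independent proof you would need either to carry out that generating-function analysis or to make the unit-circle step genuinely rigorous; as written, your proposal stops short of the substantive content.
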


In this article we say that $f(t), g(t)\in \R[t^\Z]$ belong to \emph{the} same \emph{Hillar class} if there exist some $u(t),v(t)\in \R[t^\Z]$ satisfying $f(t)\,\dot{=}\,u(t)v(t)$ and $g(t)\,\dot{=}\,u(t)v(t^{-1})$ in $\R[t^\Z]$. 
We indeed have $u(t), v(t)\in \R[t^\Z]$ in Proposition \ref{prop.Hillar}, hence the following. 
\begin{cor} If $f(t),g(t)\in \Z_S[t^\Z]$ have the same sequence of non-zero cyclic resultant absolute values, 
then there are some  $u(t), v(t)\in \R[t^\Z]$ with $f(t)\,\dot{=}\,u(t)v(t)$ and $g(t)\,\dot{=}\,u(t)v(t^{-1})$, namely, $f(t),g(t)$ belong to the same Hillar class. 
\end{cor}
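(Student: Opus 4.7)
The plan is to deduce the corollary directly from Proposition \ref{prop.Hillar} via three steps: normalize Laurent polynomials to ordinary polynomials, invoke Hillar's theorem, and perform a Galois descent from $\C$ to $\R$. First I would write $f(t) = t^{-a}\tilde f(t)$ and $g(t) = t^{-b}\tilde g(t)$ with $\tilde f, \tilde g \in \Z_S[t]$ satisfying $\tilde f(0)\tilde g(0)\neq 0$. Since $|\zeta|=1$ for every root of unity $\zeta$, we have $|f(\zeta)|=|\tilde f(\zeta)|$ and likewise for $g$, so the non-zero cyclic resultant absolute values are preserved by this normalization. Proposition \ref{prop.Hillar} then supplies $u,v\in \C[t]$ with $u(0)\neq 0$ and $l_1,l_2\in \Z_{\geq 0}$ such that $\tilde f(t) = \pm t^{l_1}v(t) u(t^{-1}) t^{\deg u}$ and $\tilde g(t) = t^{l_2}v(t) u(t)$.

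The crucial step is upgrading $u, v$ from $\C[t]$ to $\R[t]$. Applying complex conjugation $\sigma$ to these identities produces another valid Hillar decomposition $(\sigma u, \sigma v)$ of the same pair $(\tilde f, \tilde g)$, since $\tilde f,\tilde g \in \R[t]$. In Hillar's setup, the root multiset of $v$ is a submultiset of the common roots of $\tilde f$ and $\tilde g$, while the root multiset of $u$ consists of the remaining roots of $\tilde g$ whose reciprocals exhaust the remaining roots of $\tilde f$. Both multiset intersection and multiset reciprocation preserve $\sigma$-invariance, so one may choose the decomposition with $\sigma$-stable root multisets, which forces $u, v \in \R[t]$ up to a real scalar.

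Finally, since $\pm t^{l_1}$, $t^{l_2}$, and $t^{\deg u}$ are units of $\R[t^\Z]$, we obtain $f\,\dot{=}\, v(t) u(t^{-1})$ and $g\,\dot{=}\, v(t) u(t)$ in $\R[t^\Z]$. Setting $U(t) := v(t)$ and $V(t) := u(t^{-1})$, so that $V(t^{-1}) = u(t)$, gives $f\,\dot{=}\, U V$ and $g\,\dot{=}\, U\, V(t^{-1})$, exhibiting $f$ and $g$ as members of a common Hillar class. The main obstacle I anticipate is the descent step: Proposition \ref{prop.Hillar} as stated only guarantees complex-coefficient factors, and extracting $\sigma$-invariant ones requires either a careful inspection of Hillar's proof or a self-contained multiset argument exploiting that reciprocation commutes with complex conjugation.
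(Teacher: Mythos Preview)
Your proposal is correct and follows the same route as the paper, which simply asserts (without further argument) that the factors $u(t),v(t)$ in Proposition~\ref{prop.Hillar} may in fact be taken in $\R[t^\Z]$. Your normalization and Galois-descent sketch supplies the justification the paper leaves implicit; to make the descent airtight you can simply take the root multiset of $v$ to be the full multiset intersection of the (nonzero) roots of $\tilde f$ and $\tilde g$, which is automatically conjugation-stable and, as one checks from $m_G(1/\alpha)-m_F(1/\alpha)=-(m_G(\alpha)-m_F(\alpha))$, still yields a valid Hillar decomposition.
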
 

The following assertion is obtained in the proof of \cite[Lemma 3.6]{Ueki4}. 
\begin{prop} Let $f(t), g(t)\in \Z[t]$, $v\in \wh{\Z}^*$ and suppose that they have no root on roots of unity. 
If the equality $(f(t))=(g(t^v))$ of ideals in $\wh{\Z}[[t^{\wh{\Z}}]]$ holds, then they have the same sequences of  non-zero cyclic resultant absolute values. 
\end{prop}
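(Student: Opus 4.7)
The plan is to realize the $n$-th cyclic resultant absolute value $|r_n(f)|$ as the cardinality of the finite quotient ring $\wh{\Z}[t]/(t^n-1,\,f(t))$ for each $n\in\Z_{>0}$, and to transport equalities of such cardinalities across the hypothesized ideal identity $(f(t))=(g(t^v))$ via the natural continuous surjections $\wh{\Z}[[t^{\wh{\Z}}]]\surj\wh{\Z}[t]/(t^n-1)$.

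First, for each $n\in\Z_{>0}$ I would consider the continuous ring surjection
\[
\pi_n\colon \wh{\Z}[[t^{\wh{\Z}}]]\surj \wh{\Z}[t^{\Z/n\Z}]=\wh{\Z}[t]/(t^n-1)
\]
induced by the quotient $\wh{\Z}\surj \Z/n\Z$ on the exponent group, and write $v_n\in(\Z/n\Z)^*$ for the image of $v$. Since $\pi_n$ is a surjection of rings carrying units to units, the hypothesis pushes forward to the equality of ideals $(f(t))=(g(t^{v_n}))$ in $\wh{\Z}[t]/(t^n-1)$. Because $v_n$ is a unit in $\Z/n\Z$, the group automorphism $t\mapsto t^{v_n}$ of $t^{\Z/n\Z}$ extends to a ring automorphism $\sigma_n$ of $\wh{\Z}[t^{\Z/n\Z}]$ with $\sigma_n(g(t))=g(t^{v_n})$, which yields
\[
\wh{\Z}[t]/(t^n-1,\,f(t))\;\cong\;\wh{\Z}[t]/(t^n-1,\,g(t))
\]
as $\wh{\Z}$-modules, and in particular as abelian groups.

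Second, since $f$ and $g$ have no roots at any root of unity, $f(t)$ and $g(t)$ are each coprime to $t^n-1$ in $\Q[t]$, so $\Z[t]/(t^n-1,\,f(t))$ and $\Z[t]/(t^n-1,\,g(t))$ are finite abelian groups of orders $|{\rm Res}(f(t),\,t^n-1)|=|r_n(f)|$ and $|r_n(g)|$ respectively. The flatness of $\wh{\Z}$ over $\Z$ combined with the elementary identification $\wh{\Z}\otimes_{\Z} M\cong M$ for any finite abelian group $M$ (reduce to cyclic summands and use $\wh{\Z}/n\wh{\Z}=\Z/n\Z$) ensures that these orders are preserved upon base change, so the two $\wh{\Z}$-modules of the preceding display have orders $|r_n(f)|$ and $|r_n(g)|$ respectively. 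The displayed isomorphism then forces $|r_n(f)|=|r_n(g)|$ for every $n\in\Z_{>0}$, as desired. The one point that requires care is the clean pushforward of the ideal equality through $\pi_n$ — that the identity survives rather than degenerating to an inclusion — but this is automatic from $\pi_n$ being a continuous ring surjection; beyond this, the argument is routine manipulation of finite commutative rings, so no serious obstacle is expected.
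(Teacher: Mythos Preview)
Your proof is correct. The paper does not reproduce a proof here but simply cites \cite[Lemma~3.6]{Ueki4}; the argument there proceeds by exactly the same mechanism you describe --- projecting the ideal equality through the finite quotients $\wh{\Z}[[t^{\wh{\Z}}]]\surj \wh{\Z}[t^{\Z/n\Z}]$, using the automorphism $t\mapsto t^{v_n}$, and reading off $|r_n(f)|=|r_n(g)|$ as the order of the resulting finite ring --- so your approach matches the intended one.
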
 

Combining these above, we obtain the following lemma. 
\begin{lem} \label{lem.rn} 
Let $f(t), g(t)\in \Z[t]$ with no roots on roots of unity, and let $v\in \wh{\Z}^*$. 
Then the equality $(f(t^v))=(g(t))$ of ideals in $\wh{\Z}[[t^{\wh{\Z}}]]$ implies that $f(t)$ and $g(t)$ belong to the same Hillar class. 
\end{lem}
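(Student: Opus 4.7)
The plan is to chain the two preceding results in Section~\ref{Sec.Hillar} together; the lemma is essentially their concatenation once the direction of the substitution is normalized.

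First, I would recast the hypothesis $(f(t^v)) = (g(t))$ so that it matches the shape of the second proposition (the one taken from the proof of \cite[Lemma 3.6]{Ueki4}), which is stated with the exponent $v$ on the $g$-side. Since $v \in \wh{\Z}^*$, the assignment $t \mapsto t^v$ extends to a continuous ring automorphism of $\wh{\Z}[[t^{\wh{\Z}}]]$ whose inverse is $t \mapsto t^{v^{-1}}$, and this automorphism carries ideals to ideals. Applying the inverse automorphism to both generators of the given equality turns it into $(f(t)) = (g(t^{v^{-1}}))$, where $v^{-1} \in \wh{\Z}^*$ and $f(t), g(t) \in \Z[t]$ still have no roots on roots of unity.

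Next, I would invoke the preceding proposition (from \cite[Lemma 3.6]{Ueki4}) to conclude that $f(t)$ and $g(t)$ have the same sequence of non-zero cyclic resultant absolute values. Finally, I would apply the corollary immediately following Hillar's theorem (Proposition~\ref{prop.Hillar}), which asserts that any two elements of $\Z_S[t^\Z]$ sharing the same sequence of non-zero cyclic resultant absolute values must belong to the same Hillar class. This yields the desired decomposition $f(t) \,\dot{=}\, u(t)v(t)$ and $g(t) \,\dot{=}\, u(t)v(t^{-1})$ with $u(t), v(t) \in \R[t^\Z]$.

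There is no genuine obstacle; the lemma is a clean amalgamation of the two preceding results. The only minor point worth justifying is that exponentiation by a unit of $\wh{\Z}$ is a well-defined continuous ring automorphism of $\wh{\Z}[[t^{\wh{\Z}}]]$, which follows at once from the fact that $\wh{\Z}^*$ acts on the procyclic group $t^{\wh{\Z}} \cong \wh{\Z}$ by exponentiation, and that this action extends continuously to the completed group ring.
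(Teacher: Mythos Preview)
Your proposal is correct and follows essentially the same route as the paper, which simply states ``Combining these above'' to chain the proposition from \cite[Lemma 3.6]{Ueki4} with the corollary to Hillar's theorem. Your explicit normalization of the exponent via the automorphism $t\mapsto t^{v^{-1}}$ of $\wh{\Z}[[t^{\wh{\Z}}]]$ is a detail the paper leaves implicit, but it is the natural (and only) bridge between the two stated propositions.
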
 

\section{Norm maps} \label{Sec.Norm} 
The norm map ${\rm Nr}_{F/\Q}:F\to \Q$ of a finite extension $F/\Q$ is defined by $x\mapsto \prod_\sigma \sigma(x)$ where $\sigma$ runs through all embeddings $\sigma:F \inj \ol{\Q}$. If $x\in \Q$, then ${\rm Nr}_{F/\Q} x=x^{[F:\Q]}$ holds. 
For a finite set $S$ of maximal ideals of the ring $O_F$ of integers of $F$, let $S$ also denote the set of prime numbers of $\Z$ under $S$. Then the norm map restricts to the norm map ${\rm Nr}_{F/\Q}:O_{F,S}\to \Z_S$ on the $S$-integers. 
In addition, for a Laurent polynomial $f(t)=\sum_i a_i t^i\in O_{F,S}[t^\Z]$ and an embedding $\sigma:F \inj \ol{\Q}$, put $f^{\sigma}(t)=\sum_i a_i^\sigma t^i \in O_{F,S}[t^\Z]$. Then the map ${\rm Nr}_{F/\Q}:O_{F,S}[t^\Z]\to \Z_S[t^\Z]; f(t)\mapsto \prod_\sigma f^\sigma(t)$ is defined. 
This map coincides with the restriction of the norm map of a separable extension of a field of rational functions. 
Furthermore, since the preimage of each $(n,1-t^m)\subset \Z_S[t^\Z]$ by ${\rm Nr}_{F/\Q}$ contains $(n,1-t^m) \subset O_{F,S}[t^\Z]$, 
we have an induced map ${\rm Nr}_{F/\Q}: \wh{O}_{F,S}[[t^{\wh{\Z}}]]\to \wh{\Z}_S[[t^{\wh{\Z}}]]$ on the profinite completions. 

Let $\mf{a}$ be an ideal of $O_{F,S}[t^\Z]$. 
If $\mf{a}$ is a principal ideal, then ${\rm Nr}_{F/\Q}\mf{a}$ is naturally defined. 
For a general $\mf{a}$, we define ${\rm Nr}_{F/\Q}\mf{a}$ to be the ideal generated by $\{{\rm Nr}_{F/\Q}a\mid a\in \mf{a}\}$. It is equivalent to put ${\rm Nr}_{F/\Q}\mf{a}:=(\prod_\sigma \mf{a}^\sigma)\cap O_{F,S}[t^\Z]$. 
This correspondence also extends to the ideals of $\wh{O}_{F,S}[[t^\wh{\Z}]]$ and $\wh{\Z}_S[[t^\wh{\Z}]]$. 

\begin{proof}[Proof of Theorem \ref{thm} (1)]
Theorem \ref{thm} is an answer to the question asked in Section 1. The question is paraphrased to Question \ref{question}, and reduces to the cases without cyclotomic divisors by Proposition \ref{prop.cyclotomic}. 

Recall $O=O_{F,S}$. 
Let $f(t), g(t) \in O[t^\Z]$ and $v\in \wh{\Z}^*$ with the equality $(f(t))=(g(t^v))$ of ideals of $\wh{O}[[t^{\wh{\Z}}]]$. By Proposition \ref{prop.cyclotomic}, we may assume that they have no root on roots of unity. 
Put $F(t):={\rm Nr}_{F/\Q}f(t)$, $G(t):={\rm Nr}_{F/\Q}g(t) \in \Z_S[t^\Z]$.
Then we have $(F(t))=(G(t^v))$ in $\wh{\Z}_S[[t^{\wh{\Z}}]]$. 
Now consider the inverse image of ideals by $\wh{\Z}[[t^{\wh{\Z}}]]\inj \wh{\Z}_S[[t^{\wh{\Z}}]]$. 
We may assume that the largest $p$-adic norms of coefficients of $f(t)$ and $g(t)$ are both 1 for every $p$ under $S$, so that we have the equality $(F(t))=(G(t^v))$ of ideals in $\wh{\Z}[[t^{\wh{\Z}}]]$. 
Now Lemma \ref{lem.rn} assures that $F(t)$ and $G(t)$ in $\Z[t^\Z]$ belong to the same Hillar class.  
If in addition $f(t)$ and $g(t)$ are reciprocal, then we have $F(t)\,\dot{=}\,G(t)$. 

For any $\alpha \in \ol{\Q}$ and any prime number $p$, we have $\mh(t-\alpha)=\mh(t^{-1}-\alpha)$ and $\mh_p(t-\alpha)=\mh_p(t^{-1}-\alpha)$. 
Hence the Hillar class of ${\rm Nr}_{F/\Q}\Delta_\rho(t)$ determines the Mahler measures 
$\mh({\rm Nr}_{F/\Q}\Delta_\rho(t))$ and $\mh_p({\rm Nr}_{F/\Q}\Delta_\rho(t))$. 
\end{proof} 

\section{Iwasawa modules} %and roots of $\Delta_\rho(t)$} \label{Sec.Iwasawa} 

In this section, we prove Theorem \ref{thm} (2) by considering the character decomposition of the Iwasawa module associated to a surjective homomorphism $\wh{\Z}\surj \Z/m\Z\times \Zp$ and using the Iwasawa isomorphism. 
Basic references are \cite{Washington} and \cite{Ueki4}. 

\begin{proof}[Proof of Theorem \ref{thm} (2)]

For each prime ideal $\p$ of the ring $O=O_{F,S}$, let $O_\p$ denote the $\p$-adic completion of $O$, 
namely, the inverse limit $\varprojlim_n O/\p^n$ endowed with a natural topology. 
The Chinese remainder theorem yields an isomorphism $\wh{O}\cong \prod_\p O_\p$. 
Note that $\wh{O}$ is not an integral domain, while so is $O_\p$. 
We write $\Cp[[\phantom{\bullet}]]:=O_\p[[\phantom{\bullet}]]\otimes_{O_\p}\Cp$. 
We consider the map 
$$\wh{O}[[t^\wh{\Z}]]\surj O_\p[[t^{\Z/m\Z\times \Zp}]]\inj \Cp[[t^{\Z/m\Z\times \Zp}]]\congto  \prod_{\xi^m=1} \Cp[[t^{\Zp}]]\congto \prod_{\xi^m=1}\Cp[[T]]$$
given by the composite of the natural surjective homomorphisms $\wh{O}\to O_\p$ of the coefficient rings and $t^\wh{\Z} \surj t^{\Z/m\Z\times \Zp}$ of groups, the tensor product $\otimes_{O_\p}\Cp$, the correspondence $f(t)\mapsto f(t\xi)$ for each $\xi$ with $\xi^m=1$, 
and the Iwasawa isomorphism $\Cp[[t^{\Zp}]]\cong \Cp[[T]];t\mapsto 1+T$ on each component. 
For each $t- \alpha \in \Cp[t^{\Z/m\Z \times \Zp}]\subset \Cp[[t^{\Z/m\Z \times \Zp}]]$, 
the map $\Cp[[t^{\Z/m\Z \times \Zp}]]\surj \Cp[[t^{\Zp}]]\congto \Cp[[T]]$ on the component of each $\xi$ yields the correspondence $(t-\alpha)\mapsto (t\xi-\alpha)=(t-\alpha/\xi)\mapsto (1+T-\alpha/\xi)=(T-(\alpha/\xi-1))=(T-(\alpha-\xi)/\xi)$ of ideals. 
By the $p$-adic Weierstrass preparation theorem \cite[Theorem 7.3]{Washington}, for $\beta \in \C_p$, 
we have $|\beta|_p<1$ if and only if $(T-\beta)$ is not a unit, and hence the value $\beta$ is determined by the ideal $(T-\beta)$. 
If $\alpha \in \C_p$ with $|\alpha|_p=1$ and $|\alpha-\zeta|_p<1$, then the image of $(t-\alpha)$ is nontrivial only at the component corresponding to $\xi=\zeta$, and determines the value $\alpha/\zeta-1$. 

Let $v\in \wh{\Z}^*$ and let $(u,w)$ denote the image of $v$ under the map $\wh{\Z}\surj \Z/m\Z\times \Zp$. 
By the same argument as above for $s=t^v\in t^{\wh{\Z}}$, the image of $t-\alpha$ is nontrivial only for $\xi=\zeta^u$, and determines the values $(\alpha/\zeta)^w$ and $\zeta^u$. 

Now let $(f(t))$ be an ideal of $\wh{O}[[t^{\wh{\Z}}]]$ generated by an unknown polynomial $f(t)\in O[t]$, 
assuming that the image of $t^\Z$ is not specified in $t^{\wh{\Z}}$. 
Take $p$ so that $f(t)$ is monic in $\Cp[t]$ and every nonzero root $\alpha_i \in \Cp$ satisfy $|\alpha_i|_p=1$. 
Suppose that every nonzero root satisfies $|\alpha_i^m-1|_p<1$ for $m\in \N_{>0}$. 
Then the family of $\alpha_i^m$'s with multiplicity is determined, again via the Iwasawa isomorphism and the $p$-adic Weierstrass preparation theorem. 

Recall the setting of Theorem \ref{thm} and the paraphrased Question \ref{question}, and 
note that $v\in \wh{\Z}^*$ is not specified. By taking $m$ as above, the families 
$\{[(\alpha_i/\zeta_i)^w] \mid w \in \Zp^*\}$ and $\{[\zeta_i^u]\mid u\in (\Z/m\Z)^*\}$ are determined. 
\end{proof} 

The Iwasawa module of $\Zp\times \Z/m\Z$-cover decomposes into direct sum by $\Z/m\Z$-characters.
The Fitting ideal of each component coincides with the ideal of $\C_p[[T]]$ above \cite[\S 13.4]{Washington}. 

\begin{rem} Let $k'$ denote the decomposition field of $f(t)$ and put $d'=[k':\Q]$. Then Proposition \ref{propA} assures that for any but finite number of prime number $p$ with $d'|(p-1)$, we have $m=p-1$, namely, every nonzero root $\alpha_i \in \Cp$ of $f(t)$ satisfies $|\alpha_i|_p=1$ and $|\alpha^m-1|_p<1$. 
\end{rem} 

\begin{rem} Recall that for each $(\alpha_i)_i \in \ol{\Q}^d$, we denote the equivalence class by permutation of indexes by $[\alpha_i]_i=[\alpha_1,\cdots,\alpha_d] \in \ol{\Q}^d/\mca{S}_d$. 
For each $A=[\alpha_i]_i$ and $n\in \Z$, we put $A^n=[\alpha_i^n]_i \in [\ol{\Q}^d]$. 

If $m,n\in \Z$ and $A,B\in [\ol{\Q}^d]$ with $A^m=B^m$ and $A^n=B^n$, 
we do \emph{not} necessarily have $A^g=B^g$ for $g={\rm gcm}(m,n)$. 
Indeed, let $\zeta=\zeta_{12}$ be a primitive 12-th root of unity and put $A=[\zeta,\zeta^8]$, $B=[\zeta^4,\zeta^5]$. 
Then we have $A^3=B^3=[1,\zeta^3]$, $A^4=B^4=[\zeta^4,\zeta^8]$. 
\end{rem} 

\section{Proofs of Propositions \ref{propA} and \ref{propB}}

We invoke some algebraic number theory to prove Propositions \ref{propA} and \ref{propB}. 
Although Proposition \ref{propA} is a well known fact, we give a proof for the convenience of the reader,
only assuming basics described in \cite{Morishita2012} and \cite{Ueki5}. 

\begin{lem} \label{lem-propA} 
(1) Let $p$ be a prime number and let $n\in \N_{>0}$. 
Let $\zeta_i$ be a primitive $i$-th root of unity for each $i\in \N_{>0}$. 
Then $\Qp(\zeta_{p^n-1})$ is the unique unramified extension of $\Qp$ of degree $n$, 
and is a cyclic extension with the Galois group generated by the Frobenius map $\zeta_{p^n-1}\mapsto \zeta_{p^n-1}^p$. 
Let $\F_{p^n}/\F_p$ denote the residue extension of $\Qp(\zeta_{p^n-1})/\Qp$. 
Then we have natural isomorphisms of Galois groups $\Gal(\Qp(\zeta_{p^n-1})/\Qp)\cong \Gal(\F_{q^n}/\F_q)\cong \Z/n\Z$. 
The Frobenius map corresponds to the multiplication by $p$ in $\Z/n\Z$. 
(Note that there would exist some $\nu< p^n-1$ with $\Qp(\zeta_{\nu})=\Qp(\zeta_{p^n-1})$.) 

(2) A prime number $p$ is ramified in a finite extension $k/\Q$ if and only if $p$ divides the discriminant $\mathfrak{d}_k$ of $k/\Q$. Hence only finite number of prime numbers $p$ are ramified in $k/\Q$. 
\end{lem}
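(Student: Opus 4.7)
The plan is to treat part (1) through the standard Teichm\"uller-lift dictionary between unramified extensions of $\Qp$ and extensions of the residue field, and part (2) by quoting the classical discriminant criterion for ramification. For part (1), I first observe that since $\gcd(p^n-1,p)=1$, the polynomial $x^{p^n-1}-1$ is separable modulo $p$; Hensel's lemma therefore lifts each root in $\ol{\F}_p$ uniquely to a root in $\ol{\Qp}$, yielding a canonical bijection between $\F_{p^n}^\times$ and the group of $(p^n-1)$-th roots of unity in $\ol{\Qp}$. Consequently $\Qp(\zeta_{p^n-1})/\Qp$ is unramified with residue field $\F_p(\ol{\zeta}_{p^n-1})=\F_{p^n}$, and since residue degree equals total degree in the unramified case, we get $[\Qp(\zeta_{p^n-1}):\Qp]=n$. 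Uniqueness of the degree-$n$ unramified extension then follows by the same lifting argument: any such extension has residue field $\F_{p^n}$ and must contain the Teichm\"uller lift of a generator of $\F_{p^n}^\times$, hence coincides with $\Qp(\zeta_{p^n-1})$.

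For the Galois-theoretic part, I would show that the natural reduction map $\Gal(\Qp(\zeta_{p^n-1})/\Qp)\to \Gal(\F_{p^n}/\F_p)$ is an isomorphism: injectivity because an automorphism of $\Qp(\zeta_{p^n-1})$ is determined by its action on the Teichm\"uller lifts, and surjectivity by comparing the common cardinality $n$. Under this identification the Frobenius $x\mapsto x^p$ corresponds to the unique automorphism sending $\zeta_{p^n-1}$ to $\zeta_{p^n-1}^p$, and unwinding the cyclic identifications $\F_{p^n}^\times\cong \Z/(p^n-1)\Z$ and $\Gal(\F_{p^n}/\F_p)\cong\Z/n\Z$ translates this into the asserted multiplication-by-$p$ description.

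For part (2), I would invoke the classical theorem that a rational prime $p$ ramifies in a number field $k$ if and only if $p\mid \mf{d}_k$, which is standard material in algebraic number theory (and consistent with the references \cite{Morishita2012}, \cite{Ueki5} already used in the paper). Since $\mf{d}_k$ is a nonzero integer, only finitely many primes divide it, and the finiteness statement is immediate. The only point in the whole lemma that calls for genuine care is the isomorphy of the Galois reduction map in (1), which in turn rests on the structural fact that unramified local extensions are classified by their residue extensions; everything else amounts to a direct unwinding of definitions or a quotation of textbook results.
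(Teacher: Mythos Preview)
Your argument is correct and is the standard textbook route to these facts. Note, however, that the paper does not actually supply a proof of this lemma: it is stated as a collection of well-known facts from local and global number theory (the paper explicitly says it is ``only assuming basics described in \cite{Morishita2012} and \cite{Ueki5}''), and the surrounding text immediately proceeds to the proof of Proposition~\ref{propA}, which merely \emph{uses} the lemma. So there is nothing to compare against; your write-up simply fills in what the paper leaves to the references, via the expected Hensel/Teichm\"uller argument for (1) and the discriminant criterion for (2).
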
 

\begin{proof}[Proof of Proposition \ref{propA}] 
We assume that $p$ is unramified in $k/\Q$, noting that only finite number of prime numbers are ramified in $k/\Q$ by Lemma \ref{lem-propA} (2). 
Put $k_p=\Qp k$, and let $\F_{p^n}$ denote the residue field of $k_p$. 
Since $k_p/\Qp$ is an unramified finite extension, Lemma \ref{lem-propA} (1) assures that $\Gal(k_p/\Qp)\cong \Gal(\F_{p^n}/\F_p)\cong \Z/n\Z$. 

Put $l=k\cap \Qp$. Then $k/l$ is a Galois extension with $\Gal(k/l)\cong \Gal(k_p/\Qp)$. 
Indeed, let $\p$ denote the prime ideal over $(p)$ defined by $\ol{\Q}\inj \ol{\Q}_p$ and let $D<\Gal(k/\Q)$ denote the decomposition group of $\p$. The action of $D$ on $k/\Q$ induces a continuous action of $D$ on $k_p/\Q_p$ and hence a group homomorphism $\imath: D\inj \Gal(k_p/\Qp)$, which is an injection by $k\subset k_p$. 
Conversely, $\Gal(k_p/\Qp)$ acts on $k$ by restriction and induces a group homomorphism $j:\Gal(k_p/\Qp)\inj \Gal(k/\Q)$. Since $k$ is dense in $k_p$, this map $j$ also is an injection. 
By the uniqueness of extension of $p$-adic valuation, $p$-adic valuation of $k_p$ is stable by the action of ${\rm Im}(\jmath)$. Hence we have ${\rm Im}(\jmath)\subset D$, and $j$ decomposes as $j:\Gal(k_p/\Qp)\overset{\jmath}{\inj} D\inj\Gal(k/\Q)$. 
By definition, $D\overset{\imath}{\inj}\Gal(k_p/\Qp)\overset{\jmath}{\inj} D$ is an identity map. Since $j$ is an injection, $\imath$ and $\jmath$ are inverse maps to each other, and we have an isomorphism $\iota: D\congto \Gal(k_p/\Qp)$. 
Hence $k^D\subset \Qp$. Since $k^D\subset k$, we have $k^D\subset k\cap \Qp=l$. 
On the other hand, since $D={\rm Im}(\jmath)$, we have $l\subset k_p^D$. 
Therefore we have $k^D=l$ and $D=\Gal(k/l)$. 

Since $k/l$ is a subextension of $k/\Q$, we have $[k:l]=n$ and hence $n|d$. 
Recall that the Frobenius map corresponds to the multiplication by $p$ in $\Z/n\Z$.  
If $d|(p-1)$, then $n|(p-1)$, and the Frobenius map is trivial. 
Therefore we have $\F_q=\F_p$, $k_p=\Qp$, and $\alpha\in \Q_p$. 

Now let $\omega:\F_p\inj \Qp$ be a Teichm\"{u}ller lift. For all but finite number of $p$, we have $|\alpha|_p=1$. 
For such $p$ and $\alpha$, the image $\zeta$ of $\alpha$ via $\Qp\surj \F_p\overset{\omega}{\inj} \Q_p$ is a $(p-1)$-th root of unity. Hence we have $|\alpha-\zeta|_p<1$ and $|\alpha^{p-1}-1|_p<1$. 
\end{proof}

\begin{proof}[Proof of Proposition \ref{propB}]
Let $k/\Q$ be a finite Galois extension with $\alpha, \beta \in k$, 
let $k_p$ denote the closure of $k$ in $\ol{\Q}_p$, 
and let $\sigma \in \Gal(k_p/\Qp)$ with $\beta =\alpha^\sigma$. 
Then we have $\sigma^{[k_p:\Qp]}={\rm id}$ in $\Gal(k_p/\Qp)$. 
Since $\beta=\alpha^\sigma=\alpha^w$, we have 
$\alpha=\alpha^{\sigma^{[k_p:\Qp]}}=\alpha^{w^{[k_p:\Qp]}}$.
By the assumption that $\alpha$ is not a root of unity, we have $w^{[k_p:\Qp]}=1$. 
Hence $w$ is a root of unity. \end{proof}  

\section{Torsion growth}\label{sec.torsion} 

In this section, we refine results of \cite{Ueki4} and \cite{TangeRyoto2018JKTR}; 
Fox's formula and torsion growth formulas with use of Mahler measures of the twisted homology groups in a $\Z$-cover of a 3-manifold. 
%$\rho$-twisted analogues of Fox's formula and the asymptotic formulas of torsion growth in a $\Z$-cover with use of Mahler measures, 
%following \cite{Ueki4} and \cite{TangeRyoto2018JKTR}. 
We recall definitions and known results in \S11.1, state Theorem \ref{thm.mh} with some remarks in \S11.2, and prove them in \S11.3. 

\subsection{Backgrounds} \label{subsec.mh} 
Let $\Delta_K(t)$ denote the Alexander polynomial of a knot $K$ in $S^3$ and let $X_n\to X=S^3-K$ denote the $\Z/n\Z$-cover. 
Fox's formula (cf. \cite{Weber1979}) asserts that if $\Delta_K(t)$ does not vanish at roots of unity, 
then the order of the $\Z$-torsion subgroup satisfies $\ds |{\rm tor} H_1(X_n)|=\prod_{\zeta^n=1} |\Delta_K(\zeta)|=|{\rm Res}(\Delta_K(t),t^n-1)|$. 
In addition, we easily see that if we put $\Psi_n(t)={\rm gcd}(\Delta_K(t), t^n-1)$, then we have $|{\rm tor} H_1(X_n)|=c_n|{\rm Res}(\Delta_K(t), (t^n-1)/\Psi_n(t))|$ for a non-zero bounded sequence $c_n=|{\rm tor}H_1(X_\infty)/(\Psi_n(t))|$. 

We define the Mahler measure $\mh(f(t))$ of a polynomial $0\neq f(t)\in \C[t]$ by $\ds \log \mh(f(t))=\int_{|z|=1}\frac{\log|f(z)|}{z}dz$, which is naturally interpreted even if $f(t)$ has roots on $|z|=1$. 
If $f(t)=a_0\prod_i (t-\alpha_i)$ in $\C[t]$, then Jensen's formula asserts $\ds \mh(f(t))=|a_0|\prod_{|\alpha_i|>1}|\alpha_i|$.  
Since the integral over $|z|=1$ coincides with the limit of the mean value of $f(\zeta)$ for $\zeta^n=1$, 
Fox's formula yields the asymptotic formula $|{\rm tor}H_1(X_n)|^{1/n}=\mh(\Delta_K(t))$ of torsion growth \cite{GS1991}. 

The author proved in \cite{Ueki4} that if we replace the absolute value by the $p$-adic norm with $|p|=p^{-1}$, 
then a similar argument yields $|{\rm tor}H_1(X_n)|_p^{1/n}=\mh_p(\Delta_K(t))$. 
Here we define our $p$-adic Mahler measure 
$\mh_p(f(t))$ of  $f(t)\in \Cp[t]$ by $\ds \log \mh_p(f(t))=\lim_{n\to \infty}\frac{1}{n}\sum_{\zeta^n=1}\log|f(\zeta)|_p$. 
If $f(t)=a_0\prod_i (t-\alpha_i)$ in $\C_p[t]$, then Jensen's formula asserts $\ds \mh_p(f(t))=|a_0|_p \prod_{|\alpha_i|_p>1} |\alpha_i|_p$. 
(Note that this $\mh_p$ differs from that introduced by Besser--Deninger with use of Iwasawa's $p$-adic logarithm in \cite{BesserDeninger1999}.) 

Now let $\rho:\pi_K\to \GL_N(O)$ be a knot group representation over the ring $O=O_{F,S}$ of $S$-integers of a number field $F$. 
Analogues of those results above for the twisted Alexander polynomial $\Delta_\rho(t)$ and the twisted homology groups $H_1(X_n,\rho)$ 
were established by Tange \cite{TangeRyoto2018JKTR}. 
Suppose that $O$ is a UFD so that $\Delta_{\rho,i}(t)$ are defined for $i=0,1$. 
If $\rho$ is irreducible, namely, all residual representations are irreducible, then we have %$H_0(X_\infty, \rho)=0$ 
$\Delta_{\rho,0}(t)\,\dot{=}\,1$ \cite[Corollary 3]{TangeRyoto2018JKTR}. 
Suppose $\Delta_{\rho,0}(t)\,\dot{=}\,1$, so that we have $\Delta_\rho(t):=\Delta_{\rho,1}(t)\,\dot{=}\,W_\rho(t)$. 
Recall $\ol{\Delta}_{\rho}(t)={\rm Nr}_{F/\Q}\Delta_\rho(t)$. 
The Wang exact sequence induces a natural isomorphism $H_1(X_\infty,\rho)/(t^n-1)H_1(X_\infty,\rho)\cong H_1(X_n,\rho)$. Therefore if $\Delta_{\rho}(t)$ does not vanish at roots of unity, then we have $\ds |H_1(X_n,\rho)|\underset{S}{=}|\prod_{\zeta^n=1}\ol{\Delta}_{\rho}(\zeta)|=|{\rm Res}(\ol{\Delta}_{\rho}(t),t^n-1)|<\infty$. 
Here $\underset{S}{=}$ indicates equality up to multiplication by $S$. 
If $S=\emptyset$, then we have $\ds \lim_{n\to \infty} |H_1(X_n,\rho)|^{1/n}=\mh(\ol{\Delta}_\rho(t))$. 

\subsection{Theorem} 
Our aim here is to remove several assumptions above.  
Let $O=O_{F,S}$ denote the ring of $S$-integers of a number field $F$, which is not necessarily a UFD. 
Let $\pi$ be a finite type discrete group with a surjective homomorphism $\alpha:\pi\surj t^\Z$ and a representation $\rho:\pi\to \GL_N(O)$. 
Let $X$ be a connected manifold with $\pi_1(X)=\pi$ and let $X_\infty\to X$ and $X_n\to X$ denote the $\Z$-cover and $\Z/n\Z$ cover corresponding to $\alpha$ and $(\Z\surj \Z/n\Z)\circ \alpha$ for each $n\in \N_{>0}$. 
We define \emph{the norm polynomial} $\ol{\Delta}_\rho(t) \in \Z[t^\Z]$ of $\rho$ to be a generator of ${\rm Nr}_{F/\Q}\wt{{\rm Fitt}}H_1(X_\infty,\rho) \cap \Z[t^\Z]$. 
If $\Delta_\rho(t) \in O[t^\Z]$ is defined, then we have $\ol{\Delta}_\rho(t)$ $\dot{=}$ ${\rm Nr}_{F/\Q}\Delta_\rho(t)$ in $\Z_S[t^\Z]$. 

\begin{thm} \label{thm.mh} Suppose that $\rho:\pi\to \GL_N(O)$ is non-abelian and absolutely irreducible. Then, 

(1) For each $n\in \N_{>0}$, the Wang exact sequence induces a natural isomorphism 
$$H_1(X_\infty,\rho)/(t^n-1)H_1(X_\infty,\rho) \congto H_1(X_n,\rho).$$

(2) Put $\Psi_n(t)={\gcd}(\ol{\Delta}_\rho(t),t^n-1)$ and $r_n={\rm Res}(\ol{\Delta}_\rho(t), (t^n-1)/\Psi_n(t))$. 
Then the equality $$\ds |{\rm tor}H_1(X_n,\rho)|=c_n|r_n|\prod_{p\in S} |r_n|_p$$ 
holds, where $c_n$ is a bounded sequence defined by $c_n=|{\rm tor} H_1(X_\infty,\rho)/\Psi_n(t)|$. 

(3) For any prime number $p$, the asymptotic formulas  
$$\ds\lim_{n\to \infty} |{\rm tor}H_1(X_n,\rho)|^{1/n}=\mh (\ol{\Delta}_\rho(t)) \ \ {\rm and}\ \ 
\ds\lim_{n\to \infty} ||{\rm tor}H_1(X_n,\rho)||_p^{1/n}=\mh_p (\ol{\Delta}_\rho(t))$$
of torsion growth hold. 
\end{thm}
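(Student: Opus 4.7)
The plan is to prove the three parts in sequence via the Wang exact sequence, Fitting-ideal bookkeeping, and Jensen-type asymptotics.

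For part~(1), I apply the Wang long exact sequence arising from
$$
0 \to C_\ast(X_\infty,\rho) \xrightarrow{t^n-1} C_\ast(X_\infty,\rho) \to C_\ast(X_n,\rho) \to 0,
$$
which produces
$$
H_1(X_\infty,\rho) \xrightarrow{t^n-1} H_1(X_\infty,\rho) \to H_1(X_n,\rho) \to H_0(X_\infty,\rho) \xrightarrow{t^n-1} H_0(X_\infty,\rho).
$$
The natural isomorphism reduces to showing $H_0(X_\infty,\rho)=(V_\rho)_\Gamma$ vanishes. After base change to $F = {\rm Frac}(O)$, this follows by a duality argument: any nonzero $\Gamma$-fixed vector in the contragredient $V_\rho^\ast \otimes F$ would span a $\pi$-invariant subspace by normality of $\Gamma \lhd \pi$, which by absolute irreducibility of $\rho^\ast$ would force $\rho|_\Gamma$ trivial, contradicting non-abelianness of $\rho$. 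Extending Tange's argument from \cite{TangeRyoto2018JKTR} (which handled the UFD case via $\Delta_{\rho,0}(t)\dot{=}1$) to general Dedekind $O$, I rule out the remaining $O$-torsion of $H_0$ by a Fitting-ideal argument combined with Proposition~\ref{prop.Wada}.

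For part~(2), set $M := H_1(X_\infty,\rho)$, so that $H_1(X_n,\rho)\cong M/(t^n-1)M$ by (1). Writing $t^n-1 = \Psi_n(t)\,Q_n(t)$ with $Q_n(t)=(t^n-1)/\Psi_n(t)$ coprime to $\Psi_n(t)$ in $\Z[t]$ by squarefreeness of $t^n-1$, the Chinese remainder theorem yields
$$
{\rm tor}_\Z(M/(t^n-1)M) \;=\; {\rm tor}_\Z(M/\Psi_n M) \oplus {\rm tor}_\Z(M/Q_n M).
$$
The first summand has prime-to-$S$ order $c_n$, which is bounded because $\ol{\Delta}_\rho$ admits only finitely many cyclotomic divisors, so $\Psi_n$ ranges over a finite family as $n$ varies. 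For the second summand, $Q_n$ is coprime to $\ol{\Delta}_\rho$, so $M/Q_n M$ is entirely $\Z$-torsion, and a Fitting-ideal / resultant computation -- combining Proposition~\ref{prop.Wada} with the compatibility of ${\rm Nr}_{F/\Q}$ with Fitting ideals from Section~\ref{Sec.Norm} -- identifies its prime-to-$S$ order with $|r_n|\prod_{p\in S}|r_n|_p$.

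For part~(3), taking $n$-th roots in (2) yields $c_n^{1/n}\to 1$, and the remaining claim reduces to
$$
\lim_{n\to\infty}|r_n|^{1/n} = \mh(\ol{\Delta}_\rho(t)), \qquad \lim_{n\to\infty}|r_n|_p^{1/n} = \mh_p(\ol{\Delta}_\rho(t)),
$$
which is the standard Riemann-sum / Jensen's-formula argument recalled in \S\ref{subsec.mh}, after noting that $r_n$ and the full cyclic resultant ${\rm Res}(\ol{\Delta}_\rho,t^n-1)$ differ by the factor ${\rm Res}(\ol{\Delta}_\rho,\Psi_n)$, which is bounded in both the archimedean and $p$-adic norms since $\Psi_n$ takes only finitely many values. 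The main obstacle I expect lies in part~(2): carrying out the Fitting-ideal bookkeeping without the UFD hypothesis on $O$ requires careful tracking of divisorial hulls and compatibility with ${\rm Nr}_{F/\Q}$, so that the prime-to-$S$ $\Z$-torsion of $M/Q_n M$ matches $|r_n|\prod_{p\in S}|r_n|_p$ with no stray contribution from the ideal class group of $O$; a related subtlety in part~(1) is ruling out $O$-torsion in $H_0(X_\infty,\rho)$ beyond the generic vanishing over $F$.
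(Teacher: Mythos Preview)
Your overall architecture matches the paper's: Wang sequence for (1), resultant/Fitting bookkeeping for (2), Jensen-type limits for (3). Two points deserve comment.

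\textbf{Part (1).} Your duality argument that $(V_\rho)_\Gamma\otimes F=0$ is correct and is essentially what the paper extracts from \cite[Proposition~A3]{FriedlKimKitayama2012}. The ``related subtlety'' you flag---ruling out that residual $O$-torsion in $H_0(X_\infty,\rho)$ obstructs injectivity of $t^n-1$---is not merely a subtlety but a genuine gap: the paper itself later issued a corrigendum acknowledging that $t^n-1$ need not be injective on $H_0(X_\infty,\rho)$ when ${\rm Fitt}\neq\wt{\rm Fitt}$, and the statement of (1) was weakened to a surjection with bounded finite cokernel. So your instinct was right, and neither your Proposition~\ref{prop.Wada} route nor the paper's original localization argument closes this without an extra hypothesis.

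\textbf{Part (2).} Your CRT step is not justified as written: $\Psi_n$ and $Q_n=(t^n-1)/\Psi_n$ are coprime in $\Q[t]$ but generally not in $\Z[t]$ or $O[t^{\Z}]$ (for instance ${\rm Res}(\Phi_1,\Phi_p)=p$), so the direct-sum splitting of ${\rm tor}_\Z(M/(t^n-1)M)$ does not follow from squarefreeness alone. The paper sidesteps this by working one prime $\p$ of $O$ at a time, invoking \cite[Theorem~3.13]{Hillman2} at each localization $O_{(\p)}$, and then assembling the $p$-parts via the norm; this avoids needing a global CRT decomposition. (The paper is itself terse here and the corrigendum adds a further bounded correction factor $k_n$ for the same ${\rm Fitt}$ vs.\ $\wt{\rm Fitt}$ reason.) Your part (3) is fine and agrees with the paper.
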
 

A representation $\rho$ is said to be \emph{non-abelian} if it does not factors through the abelianization map $\pi\surj \pi^{\rm ab}$. 
If $\rho$ is non-abelian, then $\Delta_{\rho,i}(t)\neq 0$ (cf. \cite{FriedlKimKitayama2012}). 
A representation $\rho$ is said to be \emph{absolutely irreducible} if $\rho\otimes_O \ol{\Q}$ is an irreducible representation. This condition is much weaker than being irreducible in the sense of \cite{TangeRyoto2018JKTR}, namely, every residual representation being irreducible. Nevertheless, we have important irreducible representations over $O$ such as the Holonomy representations of hyperbolic knots. 
An (absolutely) irreducible representation $\rho:\pi\to \GL_N(O)$ with $N\geq 2$ is known to be non-abelian. 

Even if we do not assume that $\rho$ is absolutely irreducible, we still have an injective homomorphisms $p_n:H_1(X_\infty, \rho)/(t^n-1)H_1(X_\infty, \rho) \inj H_1(X_n,\rho)$ and $\Coker(p_n) \inj H_0(X_\infty,\rho)$. 

If the Dedekind domain $O$ is a UFD (hence a PID), then ${\rm Res}(\ol{\Delta}_\rho(t),t^n-1)$ may be replaced by ${\rm Nr}_{F/\Q}{\rm Res}(\Delta_\rho(t),t^n-1)$. 

The reasons why we consider $O=O_{F,S}$ instead of $O_F$ have been that 
(i) the image of any $\rho:\pi \to \GL_N(\C)$ may be contained in some $O_{F,S}$ up to conjugate,  
and (ii) we may take some $S$ so that $O_{F,S}$ is PID and hence ${\rm \Delta}_\rho(t)$ is defined. 
Theorem \ref{thm.mh} removes the reason (ii). 

\subsection{Proof}  
Here we regard several lemmas of general algebra described in \cite{TangeRyoto2018JKTR} and some $p$-adic number theory as basic facts and use them rather freely. 

\begin{proof}[Proof of Theorem \ref{thm.mh} (1)]
The Wang short exact sequence of twisted complex induces the following long exact sequence 
\[\cdots \to H_1(X_\infty,\rho)\underset{t^n-1}{\to} H_1(X_\infty,\rho)\underset{p_n}{\to} H_1(X_n,\rho)\underset{\del}{\to} H_0(X_\infty,\rho)\underset{t^n-1}{\to} H_0(X_\infty,\rho) \to \cdots .\]

Let $\p\neq 0$ be any prime ideal of $O$, 
and let $F_{(\p)}$ and $O_{(\p)}$ denote the localizations of $F$ and $O_F$ at $\p$. 
Since $\rho$ is absolutely irreducible, $\rho$ is irreducible over $F_{(\p)}$.  
By \cite[Proposition A3]{FriedlKimKitayama2012}, the Fitting ideal of $H_0(X_\infty,\rho)\otimes F_{(\p)}$ over $F_{(\p)}[t^\Z]$ is $(1)$, 
and hence the ideal $\wt{\rm Fitt} H_0(X_\infty,\rho)\otimes O_{(\p)}$ is generated by a number in $O_{(\p)}$. 
This implies that the map $t^n-1$ on $H_0(X_\infty,\rho)\otimes O_{(\p)}$ is injective, and so is $t^n-1$ on $H_0(X_\infty,\rho)$. 
Therefore we have  $\Coker p_n\cong \Im \del=0$ and an isomorphism $H_1(X_\infty,\rho)/(t^n-1)H_1(X_\infty,\rho)\congto H_1(X_n,\rho)$. 
\end{proof} 

\begin{proof}[Proof of Theorem \ref{thm.mh} (2)] 
Assume $\Psi_n(t)=1$. 
Put $\mca{H}=H_1(X_\infty,\rho)$ and $\mf{r}_n=\{{\rm Res}(f(t), t^n-1)\mid f(t)\in \wt{\rm Fitt}\mca{H}\}\subset O$. 
For the product of several distinct prime ideals $0\neq \P=\prod_j \p_j \subset O$, let $O_{(\P)}$ denote the localization at $\P$. 
For each ideal $\mf{a}\subset O$, we write $\mf{a}_{(\P)}=\mf{a}O_{(\P)}$. 
By Hillman's Theorem \cite[Theorem 3.13]{Hillman2}, we have 
$|({\rm tor}\mca{H}/(t^n-1)\mca{H})\otimes O_{(\p)}|_p=|O_{(\p)}/\mf{r}_{n\,(\p)}|_p$ for each $\p|(p)$,  
hence 
$|{\rm tor}\mca{H}/(t^n-1)\mca{H}|_p
=\prod_{\p|(p)}|({\rm tor}\mca{H}/(t^n-1)\mca{H})\otimes O_{(\p)}|_p
=\prod_{\p|(p)}|O_{(\p)}/\mf{r}_{n\,(\p)}|_p
=|O_{(p)}/\mf{r}_{n\,(p)}|_p
=|\Z_{S\,(p)}/{\rm Nr}_{F/\Q}\mf{r}_{n\,(p)}|_p
=|\Z_S/{\rm Nr}_{F/\Q}\mf{r}_n|_p
=|\Z_S/{\rm Res}(\ol{\Delta}_\rho(t),t^n-1)|_p
=|{\rm Res}(\ol{\Delta}_\rho(t),t^n-1)|_p$. 
Since an $O$-module has no $p$-torsion for $p\in S$, 
we obtain $|{\rm tor}H_1(X_n,\rho)|=|{\rm tor}\mca{H}/(t^n-1)\mca{H}|=\prod_{p\not \in S} |{\rm tor}\mca{H}/(t^n-1)\mca{H}|_p =\prod_{p\not \in S} |{\rm Res}(\ol{\Delta}_\rho(t),t^n-1)|_p.$ 

Next, suppose that $\Psi_n(t)$ is not necessarily 1, and put $C_n:=|{\rm tor} \mca{H}/\Psi_n(t)|$. Then $(C_n)_n$ is a bounded sequence, since $\{\Psi_n(t) \mid n\in \N\}$ is a finite set. Replacing $t^n-1$ by $(t^n-1)/\Psi_n(t)$ in the argument above, we obtain the desired formula. 
\end{proof}

\begin{proof}[Proof of Theorem \ref{thm.mh} (3)] 
Note that finite number of roots of unity may be skipped in the definition of ($p$-adic) Mahler measure.  
By the $p$-adic asymptotic formula \cite[Theorem 2.5]{Ueki4} and the assumption, we have $\ds \lim_{n\to \infty}|r_n|_p^{1/n}=\mh_p(\ol{\Delta}_\rho(t))=1$ for $p \in S$. 
Since $\ds \lim_{n\to \infty}C_n^{1/n}=1$, by taking $\ds \lim_{n\to \infty} \bullet^{1/n}$ in the equality  
$|{\rm tor}H_1(X_n,\rho)|=|r_n|\prod_{p\in S} |r_n|_p\times C_n$ in (2), we obtain the desired formula 
$\ds \lim_{n\to \infty}|{\rm tor}H_1(X_n,\rho)|^{1/n}=\ds \mh(\ol{\Delta}_\rho(t))$ and $\ds \lim_{n\to \infty}||{\rm tor}H_1(X_n,\rho)||_p^{1/n}=\ds \mh_p(\ol{\Delta}_\rho(t))$
\end{proof}  

\section{Riley's representations} \label{Sec.Examples} 
We revisit examples of twisted Alexander polynomials of Riley's parabolic representations of 2-bridge knot groups  exhibited in \cite[Section 9]{TangeRyoto2018JKTR} and \cite[Example 2.3]{HirasawaMurasugi2010}. 

\begin{eg} \label{eg.Riley} 
The knot group $\pi$ of a two-bridge knot $K$ admits a standard presentation $\pi=\langle a,b \mid aw=wb\rangle$. 
Let $u=\alpha$ be a root of Riley's polynomial $\Phi_K(1,u)\in \Z[u]$ and let $O$ denote the ring of integers of $F=\Q(\alpha)$. Then $u=\alpha$ corresponds to a parabolic irreducible representation $\rho:\pi\to \SL_2(O)$ defined by  $\rho(a)=\spmx{1&1\\0&1}$ and $\rho(b)=\spmx{1&0\\-u&1}$. If $O$ is a UFD (hence PID), then $\Delta_\rho(t):=\Delta_{\rho,1}(t) \in O[t^\Z]$ is defined and satisfies $\ol{\Delta}_\rho(t)={\rm Nr}_{F/\Q}\Delta_\rho(t)$. 
In addition, since any residual representation of this $\rho$ is irreducible, \cite[Corollary 3.1]{TangeRyoto2018JKTR} yields that $\Delta_{\rho,0}(t)\,\dot{=}\,1$ and $W_\rho(t)\,\dot{=}\,\Delta_{\rho}(t)$. 
If $K$ is hyperbolic, then (a lift of) the holonomy representation is given in this way. (cf. \cite{Riley1972PLMS, Riley1984}, \cite[Lemma 5]{DuboisHuynhYamaguchi2009}, \cite[Example 2.3]{HirasawaMurasugi2010}, \cite[Section 9]{TangeRyoto2018JKTR}.) 
Let $p$ be any prime number. 

(i) Let $K$ be the trefoil $3_1$. Then $\Delta_\rho(t)=t^2+1=\Phi_4(t) \in \Z[t]$. 
We have $\ol{\Delta}_\rho(t)=\Phi_4(t)^2$, $\ds \lim_{n\to \infty}|{\rm tor}H_1(X_n,\rho)|^{1/n}=\mh(\ol{\Delta}_\rho(t))=1$ and $\ds \lim_{n\to \infty}|{\rm tor}H_1(X_n,\rho)|_p^{1/n}=\mh_p(\ol{\Delta}(t))=1$. 

(ii) Let $K$ be the figure-eight knot $4_1$. Then $\Delta_\rho(t)=t^2-4t+1 \in \Z[\frac{1+\sqrt{-3}}{2}]$, 
where $O=\Z[\frac{1+\sqrt{-3}}{2}]$ is a UFD. 
We have $\ol{\Delta}_\rho(t)=\Delta(t)^2$, $\ds \lim_{n\to \infty}|{\rm tor}H_1(X_n,\rho)|^{1/n}=\mh(\ol{\Delta}_\rho(t))=(2+\sqrt{3})^2=7+4\sqrt{3}$, $\ds \lim_{n\to \infty}|{\rm tor}H_1(X_n,\rho)|_p^{1/n}=\mh_p(\ol{\Delta}_\rho(t))=1$.  

(iii) Let $K=5_1$. Then $\Delta(t)=(t^2+1)(t^4-\frac{1+\sqrt{5}}{2}t^2+1)=\Phi_4(t)$$\phi_{20}^+(t) \in \Z[\frac{1+\sqrt{5}}{2}]$, 
where $O=\Z[\frac{1+\sqrt{5}}{2}]$ is a UFD. 
We have $\ol{\Delta}_\rho(t)=(t^2+1)^2(t^8-t^6+t^4-t^2+1)=\Phi_4(t)^2\Phi_{20}(t)$, hence $\ds \lim_{n\to \infty}|{\rm tor}H_1(X_n,\rho)|^{1/n}=\mh(\ol{\Delta}_\rho(t))=1$ and $\ds \lim_{n\to \infty}|{\rm tor}H_1(X_n,\rho)|_p^{1/n}=\mh_p(\ol{\Delta}_\rho(t))=1$. 

(iv) Let $K=5_2$, which is the 2-bridge knot $K(3/7)$ of type $(3,7)$ in \cite[Example 2.3 (3)]{HirasawaMurasugi2010}, so that we have $\pi=\langle x,y\mid wx=yw \rangle$ with $w=xyx^{-1}y^{-1}xy$, $\Phi_K(1,u)=u^3+u^2+2u+1$, and Wada's invariant $W_\rho(t)=(4+\alpha^2)t^2-4t+(4+\alpha^2)$ for $\rho\otimes \C$ corresponding to a root $u=\alpha$ of $\Phi_K(1,u)$. 
PARI/GP \cite{PARI2} tells that the class number of $F=\Q(\alpha)$ is 1, hence $O_F=\Z[\alpha]$ is a PID. 
(In addition, the discriminant is $-23$, hence only $p=23$ is ramified in $F/\Q$.)  
Thus $\Delta_\rho(t)$ is defined and coincides with $W_\rho(t)$. 
(Even if $O$ is not a UFD, we still have $\ol{\Delta}_\rho(t)={\rm Nr}_{F/\Q}W_\rho(t)$.) 
We have $\ol{\Delta}_\rho(t)={\rm Nr}_{F/\Q}\Delta_\rho(t)=\prod_{\Phi(1,u)=0}((4t^2-4t+4)+u^2(t^2+1))
=(4t^2-4t+4)^3-3(4t^2-4t+4)^2(t^2+1)+2(4t^2-4t+4)(t^2+1)^2+(t^2+1)^3 
%=-89 t^6 + 232 t^5 - 475 t^4 + 528 t^3 - 475 t^2 + 232 t - 89.$ 
%=-25 t^6 + 104 t^5 - 219 t^4 + 272 t^3 - 219 t^2 + 104 t - 25
=25 t^6 - 104 t^5 + 219 t^4 - 272 t^3 + 219 t^2 - 104 t + 25$. 
By Theorem \ref{thm.mh}, we have $\ds \lim_{n\to \infty}|{\rm tor}H_1(X_n,\rho)|^{1/n}=\mh(\ol{\Delta}_\rho(t))$ and $\ds \lim_{n\to \infty}|{\rm tor}H_1(X_n,\rho)|_p^{1/n}=\mh_p(\ol{\Delta}_\rho(t))=1$. 
\end{eg} 

Theorem \ref{thm} (1) assures that 
the isomorphism class of $\wh{\rho}$ determines $\ol{\Delta}_\rho(t)$ of (i)--(iv), hence $\Delta_\rho(t)$ of (i) and (ii). Indeed, $\Delta_\rho(t)$ in (i) and (ii) are recovered from the image by ${\rm Nr}_{F/\Q}$, only by noting that they should belong to $O[t^\Z]$. 
In (i) for instance, even if we take a larger field $F$ so that $t\pm \sqrt{-1} \in O[t^\Z]$, 
we may still say $\Delta_\rho(t)\neq (t\pm \sqrt{-1})^2$ hence $\Delta_\rho(t)=t^2+1$ by Theorem \ref{thm} (2). 
In (iii), we cannot distinguish divisors $\phi_{20}^{\pm}=t^4-\frac{1\pm\sqrt{5}}{2}t^2+1 \in \Z[\frac{1+\sqrt{5}}{2}]$ of $\Phi_{20}(t)$. 
This ambiguity is inevitable, since these two factors correspond to each other by $t\mapsto t^v$ for some $v\in \wh{\Z}^*$. 
If there is some automorphism $\tau$ of $\pi$ such that $\Delta_{\rho\circ \tau}(t)=
(t^2+1)(t^4-\frac{1-\sqrt{5}}{2}t^2+1)=\Phi_4(t)\phi_{20}^-(t)$ holds, then this ambiguity is not essential. 
In (iv), $\ol{\Delta}_\rho(t)$ gives three candidates for $\Delta_\rho(t) \in \Z[\alpha][t^\Z]$.
Since the holonomy representation corresponds to both of non-real roots of $u^3+u^2+2u+1$, we may eliminate one candidate and determine $\Delta_\rho(t)$ up to complex conjugates.\\  

Here we attach remarks on the topological entropy $h$ and its \emph{purely} $p$-adic analogue $\hbar_p$ of the solenoidal dynamical system defined as the dual of the meridian action on the twisted Alexander module. 

The polynomial $\ol{\Delta}_\rho(t)$ in Example \ref{eg.Riley} (iv) coincides with $\Delta_{\rho_\Phi}(t)$ in \cite[Example 4.5]{SilverWilliams2009TA} 
associated to the total representation $\rho_\Phi:\pi\to \GL_8(\Z)$ of the Riley polynomial $\Phi_K(1,u)=u^3+u^2+2u+1$. 
The topological entropy is given by the Mahler measure as $h=\log \mh(\ol{\Delta}_\rho(t))$.  

If $\ol{\Delta}_\rho(t)$ does not vanish on $|z|_p=1$ (this may happen only if it is not monic), then Besser--Deninger's \emph{purely} $p$-adic log Mahler measure $m_p$ is defined with use of Iwasawa's $p$-adic logarithm, which is different from our $\log \mh_p$ but still satisfies the Jensen formula (\cite{BesserDeninger1999}, see also \cite{Ueki4}). By \cite[Theorem 1.4]{Katagiri2019arXiv}, which extends \cite{Deninger2009AAG}, the \emph{purely} $p$-adic periodic entropy is given by $\hbar_p=m_p({\rm Nr}_{F/\Q}\Delta_\rho(t))$. 

\section{Hyperbolic volumes} \label{sec.vol} 

An optimistic conjecture of L\"uck implies the profinite rigidity of hyperbolic volumes (cf. \cite[Section 7]{Reid-ICM2018}). 
One may ask if the hyperbolic volume ${\rm Vol}(K)$ of a hyperbolic knot $K$ is determined by the isomorphism class of the profinite completion $\wh{\rho}_{\rm hol}$ of the holonomy representation $\rho_{\rm hol}:\pi_K\to \SL_2(O)\subset \SL_2(\C)$.
In this section, we prove Corollary \ref{cor.vol} to partially answer this question. 

Recall that Wada's initial definition of $W_\rho(t)$ in \cite{Wada1994} may be regarded with less indeterminacy, so that it is defined up to multiplication by $\pm t^{\pm1}$, and coincides with the Reidemeister torsion $\tau_\rho(t)=\tau_{\rho\otimes \alpha}(S^3-K) \in F[t^\Z]$. 

Let $\rho_n$ denote the composite of $\rho_{\rm hol}$ and the symmetric representation $\SL_2(O)\to \SL_n(O)$ for each $n>1$. 
Based on the theory of analytic torsions due to M\"uller, Menal-Ferrer, Porti \cite{Muller1993JAMS, Muller2012B, MenalFerrerPorti2014} and the relationship among several torsions proved Kitano and Yamaguchi \cite{Kitano1996PJM, Yamaguchi2008Fourier}, Goda %\cite{Goda2017pja} 
proved the following formula. (Recently it was announced that 
B\'enard--Dubois--Heusener--Porti \cite{BenardDuboisHeusenerPorti2019arXiv} replaced $t=1$ 
%proved its generalization where $t=1$ is replaced 
by $t=\zeta$ for any $\zeta\in \C$ with $|\zeta|=1$.) %root of unity $\zeta$.) 

\begin{prop}[{\cite[Theorem 1.1]{Goda2017pja}}]
For each $m>0$, put $A_{2m}(t)=\tau_{\rho_{2m}}(t)/\tau_{\rho_2}(t)$ and $A_{2m+1}(t)=\tau_{\rho_{2m+1}}(t)/\tau_{\rho_3}(t)$. 
Then $\ds \lim_{m\to \infty} \frac{\log |A_{2m+1}(1)|}{(2m+1)^2}=\lim_{m\to \infty} \frac{\log |A_{2m}(1)|}{(2m)^2}=\frac{{\rm Vol}(K)}{4\pi}$ holds.
\end{prop}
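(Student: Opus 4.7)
The plan is to combine the three deep inputs flagged in the sentence immediately preceding the statement: (i) Kitano--Yamaguchi's identification of the twisted Reidemeister torsion of $S^3\setminus K$ with (a normalized form of) Wada's polynomial; (ii) M\"uller's Cheeger--M\"uller-type theorem for unimodular (non-unitary) representations, identifying Reidemeister torsion with Ray--Singer analytic torsion; and (iii) the asymptotic expansion of analytic torsion of a hyperbolic 3-manifold with coefficients in the $n$-th symmetric power of the holonomy representation, due to M\"uller in the closed case and Menal-Ferrer--Porti in the cusped case.

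First, for each $n\geq 2$, interpret $\tau_{\rho_n}(t)\in F[t^\Z]$ as the twisted Reidemeister torsion of the knot exterior $X=S^3\setminus K$ coupled with the tensor representation $\rho_n\otimes\alpha$, via Kitano's identity $W_\rho(t)\,\dot{=}\,\tau_{\rho\otimes\alpha}(X)$. Formally specializing at $t=1$ gives coefficients in $\rho_n$ itself; irreducibility of $\rho_n$ on a hyperbolic knot group ensures that the twisted cohomology of $X$ is appropriately acyclic once a basis on the cusp torus is fixed, so that $\tau_{\rho_n}(1)$ is meaningful after prescribing normalization data on $\partial X$. The quotients $A_{2m}(t)=\tau_{\rho_{2m}}(t)/\tau_{\rho_2}(t)$ and $A_{2m+1}(t)=\tau_{\rho_{2m+1}}(t)/\tau_{\rho_3}(t)$ are designed precisely to cancel this boundary normalization, so $|A_n(1)|$ becomes an intrinsic invariant of $(X,\rho_n)$ depending only on parity.

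Second, apply M\"uller's extension of the Cheeger--M\"uller theorem to convert $\log|\tau_{\rho_n}(1)|$ (with the cusp correction understood) into the Ray--Singer analytic torsion $\log T_{\mathrm{an}}(X,\rho_n)$. Then insert the Menal-Ferrer--Porti expansion
\[
\log T_{\mathrm{an}}(X,\rho_{n}) \;=\; \frac{n^{2}}{4\pi}\,{\rm Vol}(K)\;+\;O(n),
\]
valid for the cusped hyperbolic 3-manifold $X$ with $\rho_n={\rm Sym}^{n-1}\rho_{\rm hol}$. Dividing by $n^2$ (either $(2m)^2$ or $(2m+1)^2$) and letting $m\to\infty$, the fixed denominator $\tau_{\rho_2}(1)$ (resp.\ $\tau_{\rho_3}(1)$) contributes only an $O(1)$ term, so both normalized limits collapse to ${\rm Vol}(K)/4\pi$, and the parity split in the denominator of $A_n$ is forced by the fact that $\rho_n\otimes\alpha|_{\partial X}$ has qualitatively different reducibility behaviour for $n$ even versus odd.

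The hard part will be handling the cusp. For a \emph{closed} hyperbolic 3-manifold M\"uller's 1993 theorem gives the leading $n^2$-term cleanly, but the knot exterior has a torus cusp where both $\tau_\rho(1)$ and the regularized analytic torsion require delicate definitions, and where the Cheeger--M\"uller equality carries an additional boundary/cusp term. Controlling these corrections, checking that they are $O(n)$ (hence negligible after dividing by $n^2$), and verifying that they cancel exactly in the ratio $A_n$, is precisely the analytic core provided by Menal-Ferrer--Porti. Once that cancellation is in hand, the rest reduces to algebra on the Reidemeister torsion side via Kitano--Yamaguchi.
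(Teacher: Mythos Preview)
The paper does not actually prove this proposition: it is quoted verbatim as \cite[Theorem 1.1]{Goda2017pja}, with the preceding sentence merely listing the inputs (M\"uller, Menal-Ferrer--Porti, Kitano, Yamaguchi) on which Goda's argument rests. So there is no ``paper's own proof'' against which to compare your attempt.

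That said, your sketch is a faithful outline of the strategy behind Goda's theorem, and the ingredients you invoke are exactly those the paper flags. One caution: you describe the quotients $A_{2m}$ and $A_{2m+1}$ as cancelling a boundary normalization, and the $O(n)$ cusp correction as being the ``analytic core provided by Menal-Ferrer--Porti.'' This is the right picture, but turning it into a proof is nontrivial: the Cheeger--M\"uller identity in the cusped non-self-dual setting, the precise meaning of $\tau_{\rho_n}(1)$ when $H^*(\partial X;\rho_n)\neq 0$, and the exact form of the boundary defect are all delicate points that Goda (building on \cite{MenalFerrerPorti2014} and \cite{Yamaguchi2008Fourier}) has to pin down carefully. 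Your proposal correctly identifies where the work lies but does not itself carry it out; as a summary of the architecture it is fine, but it should be understood as a roadmap to Goda's paper rather than an independent argument.
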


By Remark \ref{rem.WDH}, if the modulus $|u|$ of any $u \in O$ is determined by $|{\rm Nr}_{F/\Q} u|$, then $\wh{\rho}$ determines the hyperbolic volume. 
The knots in Examples \ref{eg.Riley} (ii) and (iv) are hyperbolic, while (i) and (iii) are not.  
In the case of the figure eight knot (ii), since $u\in O$ satisfies ${\rm Nr}_{F/\Q}u=|u|^2$, 
the volume ${\rm Vol}(K)$ is determined by $\wh{\rho}$. 
(Due to Boileau--Friedl \cite{BoileauFriedl2015} and Bridson--Reid \cite{BridsonReid2015}, 
the figure eight knot has the profinite rigidity amongst 3-manifold groups. See also \cite{BridsonReidWilton2017}.) 

In general, if the numbers $r_1$ and $2r_2$ of real and complex infinite places of a number field $F$ satisfy $r_1+r_2>1$, 
then $O=O_F$ contains a unit $u$ with the modulus $|u|\neq 1$. 
The field $F$ in (iv) is a complex cubic field and satisfies $r_1+r_2>1$. 
Hence the modulus $|\Delta_\rho(1)|$ cannot be recovered from the norm ${\rm Nr}_{F/\Q}\Delta_\rho(1)$. 
Note in addition that $O=\Z$ does not occur, since a subgroup of $\SL_2(\Z)$ is a Fuchsian group, while a hyperbolic knot group is a Kleinian group which is not Fuchsian (cf.~\cite{MaclachlanReid2003GTM}).  
At this moment by Theorem \ref{thm} (1) and Remark \ref{rem.WDH}, we may conclude Corollary \ref{cor.vol}: 
If %$O=\Z$ or 
$O=O_F$ for an imaginary quadratic field $F$, then the hyperbolic volume ${\rm Vol}(K)$ of $K$ is determined by the isomorphism class of $\wh{\rho}_{\rm hol}$. 
Further approaches are attached in Question \ref{furtherq}. 

\section*{Acknowledgments} 
I would like to express my gratitude to L\'eo B\'enard, Michel Boileau, Frank Calegari, Yuichi Hirano, Teruhisa Kadokami, Takenori Kataoka, Tomoki Mihara, Yasushi Mizusawa, Alan Reid, Ryoto Tange, Anastasiia Tsvietkova, Yoshikazu Yamaguchi, people I met at CIRM in Luminy and at GAU in G\"ottingen, and anonymous referees of the previous article and this article for fruitful conversations. 
This work was partially supported by JSPS KAKENHI Grant Number JP19K14538. 

%\small 
\bibliographystyle{amsalpha}%{amsplain}%{amsalpha}
%\bibliography{/Users/uekijun/Dropbox/refs1} 
\providecommand{\bysame}{\leavevmode\hbox to3em{\hrulefill}\thinspace}
\providecommand{\MR}{\relax\ifhmode\unskip\space\fi MR }
% \MRhref is called by the amsart/book/proc definition of \MR.
\providecommand{\MRhref}[2]{%
  \href{http://www.ams.org/mathscinet-getitem?mr=#1}{#2}
}
\providecommand{\href}[2]{#2}

%\noindent 

%Jun Ueki \\
%Department of Mathematics, School of System Design and Technology, Tokyo Denki University,\\
%5 Senju Asahi-cho, Adachi-ku, 120-8551, Tokyo, Japan\\  
%E-mail: \url{uekijun46@gmail.com}  

\newpage 

\setcounter{section}{-1}
\section{Erratum to ``Profinite rigidity for twisted Alexander polynomials''} 
%{\footnotesize  (J. reine angew. Math. 771 (2021), 171--192)}} 

%\begin{abstract} 
%{\footnotesize 
We clarify the definition of the divisorial hull and recollect some basic facts. Then we correct Lemma 4.2 and Theorem 11.2 (1), (2) in the original article. %} 
%\end{abstract}

\section*{Divisorial hull} 
\emph{The divisorial hull} $\wt{\mf{a}}$ of an ideal $\mf{a}$ of an integral domain $R$ is defined as 
the intersection of all principal \emph{fractional} ideals of $R$ containing $\mf{a}$, 
so that $\wt{\mf{a}}$ is an ideal of $R$. 
Let $R_0$ denote the field of fractions of $R$. 
For each $R$-submodule $\mf{b}$ of $R_0$, put 
$[R:\mf{b}]=\{x\in R_0\mid x\mf{b}\subset R\}$. 
Then Bourbaki \cite[$\S$1, Proposotion 1]{MR0260715} asserts that $\wt{\mf{a}}=[R:[R:\mf{a}]]$. 

\begin{lem} \label{lem} 
Let $R$ be an integrally closed Noetherian domain and let $\p$ run through the set of all height 1 prime ideals of $R$, so that the localization $R_\p$ at each $\p$ is a DVR and $R=\cap_\p R_\p$ holds in $R_0$. 

{\rm (1)} If $\mf{a}$ is an ideal of $R$, then $\wt{\mf{a}}=\cap_\p \mf{a}_\p$ holds. 

{\rm (2)} If $\mf{a}$ is an ideal of $R$ and $S$ a multiplicative set, then $\wt{\mf{a}_S}=\wt{\mf{a}}_S$ in the localization $R_S$. 

{\rm (3)} If ideals $\mf{a}, \mf{b}, \mf{c}$ of $R$ satisfy $\mf{a}_\p=\mf{b}_\p\mf{c}_\p$ for every $\p$, 
then $\wt{\mf{a}}=\wt{\mf{b}}\wt{\mf{c}}$ holds. 
\end{lem} 

%\begin{comment} 

\begin{proof} (Due to J.~Hillman.) 
(1) 
Since $R$ is Noetherian, we may easily verify for each $\p$ that $[R:\mf{a}]_\p=[R_\p:\mf{a}_\p]$, and hence $\wt{\mf{a}}_\p=\wt{\mf{a}_\p}$. 
Therefore we have $\wt{\mf{a}}\subset \cap_\p \wt{\mf{a}}_\p = \cap_\p \wt{\mf{a}_\p} =\cap_\p \mf{a}_\p$. 
On the other hand, if $\mf{a}\subset (a)$ with $0\neq a\in R$, then 
$\cap_\p \mf{a}_\p\subset \cap_\p(a)_\p =(a)(\cap_\p R_\p)$ $=(a)$. 
Hence $\cap_\p \mf{a}_\p \subset \wt{\mf{a}}$. 
Thus we have $\wt{\mf{a}}=\cap_{\p} \mf{a}_\p$ (cf.\cite[Lemma 3.2]{Hillman2}).

(2) It suffices to prove the assertion for $\mf{a}\neq 0$. 
Since $R$ is a Noetherian ring, there are at most finitely many minimal prime ideals containing $\mf{a}$ 
(cf.~\cite[Proposition 4.6, Theorem 7.13]{AtiyahMacdonald}). 
Since $\mf{a}$ is a nonzero ideal of a Noetherian domain, $\mf{a}$ is contained in at most finitely many height 1 primes. 
Thus, we have $\mf{a}_\p=R_\p$ for all but finite $\p$'s. Since $R=\cap_\p R_\p$, 
the expression $\wt{\mf{a}}=\cap_{\p} \mf{a}_\p$ may be regarded as a finite intersection. 
Since localization by $S$ commutes with finite intersections, we obtain the assertion. 

(3) 
Since $R$ is a Noetherian domain, 
the assumption implies that $\cap_{\p} \mf{a}_\p=\cap_\p \mf{b}_\p \cap_\p \mf{c}_\p$. Hence by (1), we obtain the assertion. 
\end{proof} 

\section*{Erratum 1}
Lemma 4.2 in \cite{Ueki6} generalizing \cite[Theorem 3.12 (3)]{Hillman2} ought to be stated as follows, where $R$ is assumed to be \emph{integrally closed} and the assumption on $K$ is removed. We attach a refined proof to clarify the argument. 
\setcounter{section}{4}
\setcounter{thm}{1}
\begin{lem}[corrected] \label{lem.dhFitt}
Let $0\to K \to N \to C \to 0$ be an exact sequence of finitely generated modules over an integrally closed Noetherian domain $R$, and suppose $r={\rm rank}(C)$ and $s={\rm rank}(K)$. Then we have $\wt{\rm Fitt}_{r+s}(M)=\wt{\rm Fitt}_s(K)\wt{\rm Fitt}_r(C)$. 
\end{lem}

\begin{proof} Let $\p$ run through all hight 1 prime ideals of $R$. 
A finitely generated $R$-module $M$ is of rank $r$ if $M\otimes_R R_0$ is of dimension $r$ as an $R_0$-vector space. 
If an $R$-module $M$ is of rank $r$, then the localization $M_\p$ at each $\p$ is of rank $r$ as an $R_\p$-module. 
Note that every $R_\p$ is a DVR and that any finite module over a DVR is of projective dimension $\leq 1$.
Hence we have presentation matrices $P(K_\p) \in M_{k+s,k}(R_\p)$, $P(C_\p)\in M_{c+r,c}(R_\p)$,  $P(M_\p)=\smat{P(K_\p)& 0\\ \ast& P(C_\p)}$ of ranks $k$, $c$, and $k+c$ for $K_\p$, $C_\p$, and $M_\p$ respectively.
Then the only nonzero elements of ${\rm Fitt}_{r+s}(M_\p)$ are those obtained by deleting $r$-rows and $s$-rows form $P(K_\p)$ and $P(C_\p)$ respectively and talking product of the resulting elements of ${\rm Fitt}_s(K_\p)$ and ${\rm Fitt}_r(C_\p)$, and hence ${\rm Fitt}_{r+s}(M_\p)={\rm Fitt}_s(K_\p){\rm Fitt}_r(C_\p)$. 
Since the Fitting ideals commute with localizations, we obtain 
${\rm Fitt}_{r+s}(M)_\p={\rm Fitt}_s(K)_\p{\rm Fitt}_r(C)_\p$ for every such $\p$. 
Since $R$ is an integrally closed Noetherian domain, Lemma \ref{lem} (3) yields the assertion. 
\end{proof} 

Since the polynomial ring $O[t^\Z]$ over the Dedekind domain $O=O_{F,S}$ is an integrally closed Noetherian domain, Lemma \ref{lem.dhFitt} (corrected) is applicable to the proof of \cite[Proposition]{Ueki6}.

\section*{Erratum 2}

Theorem 11.1 (1), (2) in \cite{Ueki6} ought to be stated as follows, while their consequence (3) persists. 
\setcounter{section}{11}
\setcounter{thm}{0}
\begin{thm}[corrected] \label{thm} Let the notation be as above (in Section 11.2) and suppose that $\rho:\pi\to \GL_N(O)$ is non-abelian and absolutely irreducible. Then, 

{\rm (1)} For each $n\in \N_{>0}$, the Wang exact sequence induces an injective homomorphism 
\[H_1(X_\infty,\rho)/(t^n-1)H_1(X_\infty,\rho) \inj H_1(X_n,\rho)\]
with a finite cokernel, which is isomorphic to the kernel $K_n$ of multiplication by $(t^n-1)$ on $H_0(X_\infty,\rho)$. 
The sequence $(|K_n|)_n$ is bounded. 

If ${\rm Fitt}H_0(X_\infty,\rho)=\wt{\rm Fitt}H_0(X_\infty,\rho)$ holds, then $(|K_n|)_n$ is trivial. 

{\rm (2)} Put $\Psi_n(t)={\gcd}(\ol{\Delta}_\rho(t),t^n-1)$ and $r_n={\rm Res}(\ol{\Delta}_\rho(t), (t^n-1)/\Psi_n(t))$. 
Then the equality \[\ds |{\rm tor}H_1(X_n,\rho)|=c_n k_n|r_n|\prod_{p\in S} |r_n|_p \]
holds, where $c_n$ is a bounded sequence defined by $c_n=|{\rm tor} H_1(X_\infty,\rho)/\Psi_n(t)|$ and
$k_n$ is another bounded sequence. 

If ${\rm Fitt}H_i(X_\infty,\rho)=\wt{\rm Fitt}H_i(X_\infty,\rho)$ holds for $i=0,1$, then $(k_n)_n$ is trivial. 

{\rm (3)} For any prime number $p$, the asymptotic formulas  
\[\ds\lim_{n\to \infty} |{\rm tor}H_1(X_n,\rho)|^{1/n}=\mh (\ol{\Delta}_\rho(t)) \ \ {\rm and}\ \ 
\ds\lim_{n\to \infty} ||{\rm tor}H_1(X_n,\rho)||_p^{1/n}=\mh_p (\ol{\Delta}_\rho(t))\]
of torsion growth hold. 
\end{thm}

We ought to care that $H_i(X_\infty, \rho)$ may have a non-trivial pseudo-null submodule. 
In the proof of (1), 
the multiplication by $t^n-1$ on $H_0(X_\infty,\rho)\otimes O_{(\p)}$ is not necessarily injective. 
In addition, in the proof of (2), the module $\mca{H}=H_1(X_\infty,\rho)$ does not necessarily satisfy the assumption of \cite[Theorem 3.13]{Hillman2}. 
Here we attach an additional argument to complete the proof. 

\begin{proof} 
By the structure theorem of finitely generated torsion modules over an integrally closed Noetherian domain \cite[Theorem 2.36]{Ochiai2014-Iwasawa1}, we have a homomorphism 
$\varphi_i: H_i(X_\infty,\rho)\overset{\sim}{\to} \mca{M}_i$ $=\bigoplus_j O[t^\Z]/\p_{i,j}^{r_{i,j}}$ to a standard module $\mca{M}_i$ with finite kernel and cokernel, where $(\p_{i,j})_j$ is a finite sequence of height 1 prime ideals of $O[t^\Z]$ and $r_{i,j}\in \Z_{>0}$, 
hence a finite sequence $(\mf{m}_j)_j$ of maximal ideals of $O[t^\Z]$ satisfying  ${\rm Fitt}H_i(X_\infty,\rho)=\wt{\rm Fitt}H_i(X_\infty,\rho)\prod_j \mf{m}_{i,j}$. If $\{\mf{m}_{i,j}\}_j$ is empty, then $\varphi_i$ is an isomorphism.

Consider the injective homomorphism $H_1(X_\infty,\rho)/(t^n-1)H_1(X_\infty,\rho) \inj H_1(X_n,\rho)$, whose cokernel is isomorphic to $K_n={\rm Ker}(t^n-1|_{H_0(X_\infty,\rho)})$. 
If $\{\mf{m}_{0,j}\}_j$ is empty, then the assumption on $\rho$ and \cite[Proposition A3]{FriedlKimKitayama2012} yields $|K_n|=1$. 
If otherwise, a nontrivial pseudo-null submodule of $H_0(X_\infty,\rho)$ yields a nontrivial bounded sequence $(|K_n|)_n$ contributing to $k_n$. 

If ${\rm gcd}(\ol{\Delta}_{\rho}(t),t^n-1)= 1$, then 
$t^n-1|_{\mca{M}_1}$ is injective and the sizes of the kernel and the cokernel of $t^n-1|_{{\rm Coker} \varphi_1}$ coincide. Hence by using the snake lemma twice, we obtain 
$|H_1(X_\infty,\rho)/(t^n-1)H_1(X_\infty,\rho)|$ $=|{\rm Ker}\varphi_1/(t^n-1){\rm Ker}\varphi_1||\mca{M}_1/(t^n-1)\mca{M}_1|$. 
The original proof of (2) with use of \cite[Theorem 3.13]{Hillman2} applies to $\mca{M}_1$, instead of $\mca{H}=H_1(X_\infty,\rho)$. 
If $\{\mf{m}_{1,j}\}_j$ is not empty, then the bounded sequence $(|{\rm Ker}\varphi/(t^n-1){\rm Ker}\varphi|)_n$ also contributes to $k_n$. 
This argument may be modified to the case with ${\rm gcd}(\ol{\Delta}_{\rho}(t),t^n-1)\neq 1$ as claimed at the and of the original proof. 
\end{proof} 

We remark that Theorem \ref{thm} (corrected) still implies the previous result due to R.~Tange \cite[Theorem 13]{TangeRyoto2018JKTR}. 
Indeed, if $H_0(X_\infty,\rho)=0$ holds, then the exact sequence in \cite[Proposition 4.1]{Ueki6} and 
\cite[Theorem 3.12 (2)]{Hillman2} yield that 
${\rm Fitt}H_1(X_\infty,\rho)={\rm Fitt}\mca{A}/{\rm Fitt}\mca{C}$ $=(W_\rho^\alpha(t))$. 
Thus we have 
${\rm Fitt}H_i(X_\infty,\rho)=\wt{\rm Fitt}H_i(X_\infty,\rho)$ for $i=0,1$, 
hence $k_n=1$.\\  

\section*{Acknowledgments} 
I am grateful to Jonathan Hillman, %Takahiro Kitayama, 
Hirotaka Koga, Tomoki Mihara, Yuya Murakami, Ryoto Tange, and Tomoki Yuji for helpful communication. 
This work was partially supported by JSPS KAKENHI Grant Number JP19K14538. 

\bibliographystyle{amsalpha}
%\bibliography{/Users/uekijun/Dropbox/refs1} 
\providecommand{\bysame}{\leavevmode\hbox to3em{\hrulefill}\thinspace}
\providecommand{\MR}{\relax\ifhmode\unskip\space\fi MR }
% \MRhref is called by the amsart/book/proc definition of \MR.
\providecommand{\MRhref}[2]{%
  \href{http://www.ams.org/mathscinet-getitem?mr=#1}{#2}
}
\providecommand{\href}[2]{#2}

\end{document}